\newtheorem{iThm}{Theorem}
\newtheorem{thm}{Theorem}[section]
\newtheorem{cor}[thm]{Corollary}
\newtheorem{lem}[thm]{Lemma}
\newtheorem{prop}[thm]{Proposition}
\theoremstyle{definition}
\newtheorem{defn}[thm]{Definition}
\theoremstyle{remark}
\newtheorem{rem}[thm]{Remark}
\newtheorem{claim}{Claim}[thm]
\numberwithin{equation}{section}
\newcommand{\vertiii}[1]{{\left\vert\kern-0.25ex\left\vert\kern-0.25ex\left\vert #1 
    \right\vert\kern-0.25ex\right\vert\kern-0.25ex\right\vert}}
\newcommand{\norm}[1]{\left\Vert#1\right\Vert}
\newcommand{\abs}[1]{\left\vert#1\right\vert}
\newcommand{\set}[1]{\left\{\,#1\,\right\}}
\newcommand{\defined}[1]{\emph{#1}}
\newcommand{\mc}[1]{\mathcal{#1}}
\newcommand{\op}[1]{\operatorname{#1}}
\newcommand{\ignore}[1]{}
\newcommand{\LI}{\to}
\newcommand{\IL}{\mc{L}_{\omega_1, \omega}}
\newcommand{\mb}[1]{\mathbf{#1}}
\newcommand{\sqeq}{\trianglelefteq}
\newcommand{\sq}{\triangleleft}
\newcommand{\Disc}{\op{Discrete}}
\newcommand{\tup}[1]{\left\langle\,#1\,\right\rangle}
\newcommand{\simneg}{\raise.17ex\hbox{$\scriptstyle\sim$}}
\renewcommand{\phi}{\varphi}
\begin{document}

\title[Omitting types]{Omitting types for infinitary $[0, 1]$-valued logic}%
\author{Christopher J. Eagle}%
\thanks{Research partially supported by an NSERC PGS-D award}
\address{University of Toronto, Department of Mathematics, 40 St. George St., Toronto, Ontario, Canada M5S 2E4}%
\email{cjeagle@math.toronto.edu}%

\subjclass[2000]{03B50, 03B52, 03C65, 03C75, 03C90, 54E52}%
\keywords{Real-valued logic, continuous logic, infinitary logic, metric structures, omitting types, Baire category, Banach space, two-cardinal theorem}%

\date{\today}%
% ----------------------------------------------------------------
\begin{abstract}
We describe an infinitary logic for metric structures which is analogous to $\IL$.  We show that this logic is capable of expressing several concepts from analysis that cannot be expressed in finitary continuous logic.  Using topological methods, we prove an omitting types theorem for countable fragments of our infinitary logic.  We use omitting types to prove a two-cardinal theorem, which yields a strengthening of a result of Ben Yaacov and Iovino concerning separable quotients of Banach spaces.
\end{abstract}
\maketitle
% ----------------------------------------------------------------
\section*{Introduction}
\renewcommand\theiThm{\Alph{iThm}}
In this paper we study an infinitary logic for structures based on metric spaces.  Our logic is the natural generalization of $\IL$ to the setting of metric structures, and our main result is an Omitting Types Theorem for this logic.

Real-valued logics have had a variety of applications in analysis, beginning with the introduction of ultrapowers of Banach spaces by Dacunha-Castelle and Krivine in \cite{Dacunha-Castelle1970}.  Krivine, and later Stern, used this approach to solve important problems in functional analysis \cite{Krivine1976}, \cite{Krivine1981}, \cite{Stern1978}.  See also \cite{Krivine1972}, \cite{Krivine1974}, \cite{Krivine1984}.  In \cite{Chang1966} Chang and Keisler develop a general framework for continuous model theory with truth values in a fixed compact Hausdorff space $K$, with the case $K = [0, 1]$ being the motivating example.  In recent years there has been a considerable amount of activity in the $[0, 1]$-valued logic known as first-order continuous logic, which Ben Yaacov and Usvyatsov introduced in \cite{BenYaacov2010} as a reformulation of Henson's logic for Banach spaces (see \cite{Henson2003}) in the framework of \cite{Chang1966}.  See \cite{BenYaacov2008a} for a self-contained introduction to first-order continuous logic.

In this paper we extend the $[0, 1]$-valued logics mentioned above by allowing formulas with conjunctions and disjunctions of countable length, which makes our logic a $[0, 1]$-valued version of $\IL$.  Including infinitary formulas in our logic allows us to axiomatize important classes of structures from functional analysis (see Section~\ref{SectionInfinitary}).  Our logic does not satisfy the compactness theorem, but as our main result shows, countable fragments of this logic do satisfy the classical Omitting Types Theorem.

In the $[0, 1]$-valued setting we say that a type is \defined{principal} over a theory $T$ if there is a formula $\phi$ consistent with $T$, and an approximation $\phi'$ of $\phi$, such that in models of $T$ elements satisfying $\phi'$ are realizations of $\Sigma$ (see Definition~\ref{defnPrincipal2}).  Our main result is the following:

\begin{iThm}\label{thmOmittingTypesStatement}
Let $S$ be a metric signature, and let $L$ be a countable fragment of $\IL(S)$.  Let $T$ be an $L$-theory.  For each $n < \omega$, let $\Sigma_n$ be a type of $T$ that is not principal over $T$.  Then there is a model of $T$ that omits every $\Sigma_n$.
\end{iThm}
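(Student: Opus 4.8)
The plan is to run a Henkin-style construction inside a countable expansion of the language, organized so that the omitting requirements become density conditions to which the Baire category theorem can be applied. First I would fix a countable set $\set{c_i : i < \omega}$ of new constant symbols and let $L'$ be the smallest countable fragment of $\IL(S')$ over the expanded signature $S' = S \cup \set{c_i : i < \omega}$ that contains $L$ and is closed under the relevant subformula and substitution operations. The objects of the construction are the consistent \emph{conditions}: finite sets of expressions of the form $\sigma < r$ or $\sigma > r$ (with $\sigma$ an $L'$-sentence and $r$ rational) that are approximately satisfiable together with $T$. Ordered by reverse inclusion these form a forcing notion, and I would topologize the set $X$ of complete, consistent, Henkin-complete extensions of $T$ in $L'$ by the sub-basic sets $\set{t \in X : (\sigma < r) \in t}$. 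The target is a single point of $X$: a complete theory whose canonical term model, after metric completion, is a model of $T$ omitting every $\Sigma_n$.

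Next I would isolate the countably many dense requirements. Completeness requires, for each $L'$-sentence $\sigma$ and rationals $r < s$, forcing either $\sigma < s$ or $\sigma > r$, so that the generic pins down a value for every sentence. Quantifier witnessing requires, for each formula of the form $\inf_x \phi$, a constant $c$ with the value of $\phi(c)$ forced down to within $\epsilon$ of the value assigned to $\inf_x \phi$. The genuinely infinitary requirement is connective witnessing: for each countable connective $\inf_n \phi_n$ occurring in $L'$, some $\phi_n$ must be forced to approximate the assigned value; this step has no finitary analogue and is where closure of the fragment $L'$ under its connectives is used. Each of these sets is routinely dense, using only that conditions are approximately satisfiable together with $T$.

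The heart of the argument is the omitting requirement. For each $n < \omega$ and each tuple $\bar c$ of new constants, I want the set $D_{n, \bar c}$ of conditions forcing $\bar c$ to fail some $\psi \in \Sigma_n$ by a definite positive margin to be dense. Given a condition $p$, read the constraints it places on $\bar c$ as a formula $\phi(\bar x)$ consistent with $T$; by the definition of non-principality, for every approximation $\phi'$ of $\phi$ there is a model of $T$ with a tuple satisfying $\phi'$ yet omitting $\Sigma_n$, that is, with $\psi$ taking a positive value for some $\psi \in \Sigma_n$. Transcribing this back yields an extension of $p$ lying in $D_{n, \bar c}$, so $D_{n, \bar c}$ is dense. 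This is exactly where non-principality is consumed, and the positive margin $\epsilon$ is essential: it is an \emph{open} constraint, stable under the perturbations introduced at later stages and, crucially, under passage to the metric completion.

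Having arranged density, I would prove that $X$ is a Baire space, which I expect to be the main structural obstacle, since the logic is not compact and I cannot simply invoke compactness of a type space. I anticipate establishing this either by exhibiting $X$ as a suitable completely metrizable space of approximate structures on the countable constant set, or by a direct argument producing, from a decreasing sequence of conditions meeting each dense set, a generic $t \in X$. Applying the Baire category theorem to these countably many dense sets yields such a $t$; its constants form a dense pre-metric structure whose completion $M$ satisfies $T$. The last point to verify, and the second genuine difficulty peculiar to the $[0, 1]$-valued setting, is that $M$ still omits each $\Sigma_n$: since every element of $M$ is a limit of constants, I must rule out a type being realized only in the limit. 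This is precisely what the approximation clause in the definition of principal is designed to prevent; combined with the uniform continuity moduli of the formulas and the positive margins secured in $D_{n, \bar c}$, it forces any putative realization in the completion to be approximately realized by a nearby constant tuple, contradicting non-principality.
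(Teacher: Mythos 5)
Your overall architecture --- Henkin constants, countably many dense requirements, non-principality consumed exactly in the omitting requirement, a generic object read back as a term model --- is essentially a syntactic version of the paper's argument (the paper works with the space $\mc{W}$ of countable Henkin-witnessing \emph{structures} under the logic topology and proves it \v{C}ech-complete, hence Baire, via an ultraproduct argument; your space $X$ of complete Henkin theories is in effect its $T_0$ quotient). The genuine error is your last step: passing to the metric completion $M$ of the structure built on the constants. First, the claim that $M \models T$ is unjustified: truth values of infinitary formulas need not survive completion, because formulas of $\IL$ need not define continuous functions of their arguments --- only the \emph{symbols} of the signature carry moduli of continuity. For instance, iterating the doubling connective $t \mapsto (\neg t \LI t)$, whose value is $\min\set{2t,1}$, under a countable supremum produces a formula whose value is $0$ when $d(x,c)=0$ and $1$ otherwise, which is discontinuous; so your appeal to ``the uniform continuity moduli of the formulas'' is unfounded. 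Second, and for the same reason, the positive margins you secure at constant tuples do not propagate to limits of constants, so non-principality alone does not rule out a realization of $\Sigma_n$ appearing only in the completion. These are precisely the two obstructions the paper isolates in Section~\ref{subsectionCompleteStructures}: completions are handled only for \emph{continuous} fragments (Definition~\ref{defnFragmentCts}) and only for types that are not \emph{metrically} principal (non-principality of some thickening $\Sigma_n^\delta$), and neither hypothesis is available in Theorem~\ref{thmOmittingTypesStatement}. The repair is simply to stop before completing: the theorem asks only for a metric structure, not a complete one, and the structure on the constants that your dense sets already control is the desired model; this is exactly what the paper does (its witness is a countable structure, elementary in the ambient Henkin model by a Tarski--Vaught argument, Lemma~\ref{lemElementary}).

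Two further points. If $S$ has function symbols, the universe of the term structure consists of closed terms rather than constants, so dense sets $D_{n,\bar{c}}$ indexed only by constant tuples do not suffice; this is why principality is defined with terms (Definition~\ref{defnPrincipal2}), and one must close the family of types to be omitted under substitution of terms --- your proposal never addresses this. Finally, your deferred claim that $X$ is Baire, which you rightly flag as the structural crux, can in your formulation be bypassed: a Rasiowa--Sikorski-style recursion meeting the countably many dense sets of conditions suffices, \emph{provided} you verify that the resulting complete theory is satisfiable --- in a non-compact logic this is not automatic, and it is exactly here that the real work lies (it is supplied by carrying out your term-model construction in full, checking by induction on formulas that the generic values agree with truth values in the term structure; in the paper the corresponding work is the ultraproduct proof of \v{C}ech-completeness of $\mc{W}$, Proposition~\ref{propCechComplete}).
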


Each type $\Sigma_n$ is in finitely many variables, but the number of variables may increase with $n$.  For certain fragments of $\IL$, which we call \defined{continuous fragments}, Theorem~\ref{thmOmittingTypesStatement} implies an Omitting Types Theorem in which the resulting model is based on a complete metric space (Proposition~\ref{propMetricOmitting}).  This latter version generalizes Henson's Omitting Types Theorem to infinitary languages (see \cite{BenYaacov2007}).  Henson's theorem is the motivation for recent uses of omitting types to characterize certain classes of operator algebras \cite{Carlson}.

In the finitary setting it is straightforward to generalize the Omitting Types Theorem to uncountable languages (see Theorem \cite[Theorem 2.2.19]{Chang1990}).  However, the main result of \cite{Caicedo2012} shows that Theorem~\ref{thmOmittingTypesStatement} cannot be generalized to arbitrary uncountable fragments of $\IL$.

Our approach is topological, based on the connection between omitting types and the Baire Category Theorem.  The Omitting Types Theorem for the classical (discrete) $\IL$ is originally due to Keisler in \cite{Keisler1971}.  Later Morley \cite{Morley1974} obtained the same result by showing that a relevant topological space is metrizable by a complete metric.  In our proof of Theorem~\ref{thmOmittingTypesStatement} we avoid the issue of metrizability by instead working with a topological notion of completeness.  Applying our proof in the classical setting thus gives a simplification of Morley's argument.  We have learned from Iovino \cite{IovinoPrivate} that Caicedo has independently, in unpublished work, obtained the conclusion of Theorem~\ref{thmOmittingTypesStatement} by adapting Morley's argument.

As applications of Theorem~\ref{thmOmittingTypesStatement} we prove a $[0, 1]$-valued version of Keisler's two-cardinal theorem (Theorem~\ref{thmKeisler}), and we extend a result of Ben Yaacov and Iovino from \cite{benYaacov2009} regarding non-trivial separable quotients of Banach spaces (Corollary~\ref{corBanach}).

The paper requires only basic knowledge of classical first-order model theory.  In Section~\ref{SectionPreliminaries} we provide the needed background material from both general topology and the model theory of metric structures.  In Section~\ref{SectionInfinitary} we introduce our $[0, 1]$-valued version of $\IL$ and state some of its basic properties.  The topological spaces we will use in the proof of Theorem~\ref{thmOmittingTypesStatement} are described in Section~\ref{SectionTopology}.  In Section~\ref{SectionOmittingTypes} we prove that the topological space from the previous section satisfy a topological completeness property which implies that they are Baire spaces.  We then show that these topological results imply Theorem~\ref{thmOmittingTypesStatement}.  Section~\ref{SectionApplication} contains the two-cardinal theorem and the applications to separable quotients of Banach spaces.

\textbf{Acknowledgements: } We would like to thank Frank Tall for suggesting the use of \v{C}ech-completeness as a replacement for compactness or metrizability in the study of the spaces of Section~\ref{SectionTopology}.  We also wish to thank Jos\'e Iovino for helpful comments on earlier versions of this paper.  Finally, we thank the anonymous referee for their helpful comments.  Most of Section~\ref{subsectionProof}, as well as part of Section~\ref{subsectionCompleteStructures}, follows \cite{Caicedo2012} closely, and is included here for completeness.

\section{Preliminaries}\label{SectionPreliminaries}
\subsection{Topological preliminaries}
The spaces we will use are not $T_0$, but do have other separation properties.  In particular, we say that a space is completely regular to mean that points and closed sets can be separated by continuous functions, without assuming the Hausdorff condition.  The most important notion from topology for us is the notion of Baire category.  Recall that if $X$ is a topological space and $A \subseteq X$, then $A$ is \defined{nowhere dense} if $\op{int}(\overline{A}) = \emptyset$.  A space $X$ is \defined{Baire} if whenever $A_n, n < \omega$ are closed nowhere dense subsets of $X$, then $X \setminus \left(\bigcup_{n < \omega}A_n\right)$ is dense in $X$.  The classical Baire Category Theorem states that locally compact Hausdorff spaces and completely metrizable spaces are Baire.  The spaces we will be considering are neither locally compact nor metrizable, but they do have a more general property, which we shall now describe.  Recall that a family $\mc{F}$ of sets is \defined{centred} if $\bigcap \mc{F}' \neq \emptyset$ for every finite $\mc{F}' \subseteq \mc{F}$.

\begin{defn}\label{defnCechComplete}
Let $X$ be a completely regular space.  A \defined{complete sequence of open covers of $X$} is a sequence $\tup{\mc{U}_n : n < \omega}$ of open covers of $X$ with the following property: If $\mc{F}$ is a centred family of closed subsets of $X$ such that for each $n < \omega$ there is $F_n \in \mc{F}$ and $U_n \in \mc{U}_n$ such that $F_n \subseteq U_n$, then $\bigcap \mc{F} \neq \emptyset$.

A completely regular space $X$ is \defined{\v{C}ech-complete} if there exists a complete sequence of open covers of $X$.
\end{defn} 

If the space $X$ is completely regular and Hausdorff then $X$ is \v{C}ech-complete if and only if $X$ is a $G_\delta$ subspace of some (equivalently, every) compactification.  For metrizable spaces, being \v{C}ech-complete is equivalent to being completely metrizable.  It follows from these two facts that if $X$ is either locally compact Hausdorff or completely metrizable then $X$ is \v{C}ech-complete.  The following result states the two key facts about \v{C}ech-complete spaces that we will use in the proof of Theorem \ref{thmOmittingTypesStatement}.  These facts are stated and proved in \cite{Engelking1989} for completely regular Hausdorff spaces, but the proof does not use the Hausdorff condition.

\begin{lem}\label{lemCompleteProperties}
Let $X$ be a completely regular space.
\begin{enumerate}
\item{
If $X$ is \v{C}ech-complete then $X$ is Baire.
}
\item{
If $X$ is \v{C}ech-complete and $F \subseteq X$ is a closed subspace, then $F$ is \v{C}ech-complete.
}
\end{enumerate}
\end{lem}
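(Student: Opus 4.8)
The plan is to handle the two parts separately, with part (2) a short direct verification and part (1) carrying the real content.

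For part (2), given a complete sequence $\tup{\mc{U}_n : n < \omega}$ of open covers of $X$ and a closed set $F \subseteq X$, I would restrict each cover, setting $\mc{U}_n^F = \set{U \cap F : U \in \mc{U}_n}$, and claim that $\tup{\mc{U}_n^F : n < \omega}$ is a complete sequence of open covers of $F$. Note first that $F$ is completely regular, being a subspace of a completely regular space. The verification of completeness is then immediate: if $\mc{F}$ is a centred family of subsets of $F$, each closed in $F$, with $F_n \subseteq U_n \cap F$ for suitable $F_n \in \mc{F}$ and $U_n \in \mc{U}_n$, then since $F$ is closed in $X$ every member of $\mc{F}$ is also closed in $X$, the family $\mc{F}$ remains centred in $X$, and $F_n \subseteq U_n \in \mc{U}_n$. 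Completeness of $\tup{\mc{U}_n}$ in $X$ now yields $\bigcap \mc{F} \neq \emptyset$, as required.

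For part (1), observe that if $A_n$ are closed nowhere dense then the sets $G_n = X \setminus A_n$ are dense open and $X \setminus \bigcup_n A_n = \bigcap_n G_n$, so it suffices to show that $\bigcap_n G_n$ meets every nonempty open set $W$. Fixing a complete sequence $\tup{\mc{U}_n}$, I would build by recursion a decreasing sequence of nonempty open sets $V_n$ together with members $U_n \in \mc{U}_n$ such that $\overline{V_n} \subseteq V_{n-1} \cap G_n \cap U_n$ (with $V_{-1} = W$). At stage $n$, density of $G_n$ supplies a point $x_n \in V_{n-1} \cap G_n$; since $\mc{U}_n$ covers $X$, some $U_n \in \mc{U}_n$ contains $x_n$; and I then shrink to a $V_n$ with $\overline{V_n}$ trapped inside $V_{n-1} \cap G_n \cap U_n$. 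The family $\set{\overline{V_n} : n < \omega}$ is decreasing, hence centred, and satisfies $\overline{V_n} \subseteq U_n \in \mc{U}_n$, so the completeness hypothesis produces a point $x \in \bigcap_n \overline{V_n}$. Since $\overline{V_n} \subseteq G_n$ for every $n$ and $\overline{V_0} \subseteq W$, this $x$ lies in $W \cap \bigcap_n G_n$.

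The main obstacle is the shrinking step in part (1): given $x_n$ in an open set $O$, produce an open $V_n$ with $x_n \in V_n$ and $\overline{V_n} \subseteq O$. Classically this uses regularity, and the delicate point here is that we do not assume the Hausdorff condition. Complete regularity nonetheless suffices: it provides a continuous $f \colon X \to [0,1]$ with $f(x_n) = 0$ and $f \equiv 1$ on $X \setminus O$, and then $V_n = f^{-1}([0, 1/2))$ satisfies $\overline{V_n} \subseteq f^{-1}([0, 1/2]) \subseteq O$. This is precisely where one checks that dropping Hausdorff costs nothing, and it is the only place any separation axiom enters the argument.
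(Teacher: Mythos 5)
Your proof is correct. Note, though, that the paper does not actually prove this lemma: it cites Engelking's textbook, where both facts are established for completely regular \emph{Hausdorff} spaces, and merely remarks that ``the proof does not use the Hausdorff condition.'' Your argument is a self-contained substantiation of exactly that remark, and it has the additional merit of working directly from the paper's own Definition~1.1 (complete sequences of open covers) rather than from the $G_\delta$-in-a-compactification characterization, which is how \v{C}ech-completeness is usually defined in the Hausdorff setting and which does not even make sense without Hausdorff. Your part (2) is the right argument: closed-in-$F$ sets are closed in $X$, so a centred family in $F$ witnessing the restricted covers also witnesses the original covers. Your part (1) is the standard Baire-category recursion, and you correctly isolate the one place a separation axiom is needed --- the shrinking step $\overline{V_n} \subseteq V_{n-1} \cap G_n \cap U_n$ --- and replace the usual appeal to regularity by the function $f$ with $f(x_n)=0$ and $f \equiv 1$ off the open set, taking $V_n = f^{-1}\bigl([0,\tfrac12)\bigr) \subseteq f^{-1}\bigl([0,\tfrac12]\bigr)$. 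The decreasing closed sets $\overline{V_n}$ are nonempty (each contains $x_n$), hence centred, and each sits inside some $U_n \in \mc{U}_n$, so completeness of the cover sequence yields a point of $W \cap \bigcap_n G_n$, as required. No gaps.
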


\begin{rem}\label{remSetTheoretic}
Under additional set-theoretic assumptions, the first part of Lemma~\ref{lemCompleteProperties} can be improved, as follows.  For an infinite cardinal $\kappa$, a space $X$ is \defined{$\kappa$-Baire} if the intersection of fewer than $\kappa$ dense open subsets of $X$ is dense in $X$.  In this terminology our previous definition of Baire corresponds to $\aleph_1$-Baire.  Recall that a space $X$ has the \defined{countable chain condition} if every family of pairwise disjoint open subsets of $X$ is at most countable.  Tall \cite[Theorem 2.3]{Tall1974} observed that Martin's Axiom implies that \v{C}ech-complete spaces with the countable chain condition are $2^{\aleph_0}$-Baire.  Essentially the same proof shows that Martin's Axiom restricted to countable partial orders implies that any \v{C}ech-complete space with a countable base is $2^{\aleph_0}$-Baire.  See \cite{Fremlin1984} for details about Martin's Axiom.

The proof in \cite{Tall1974} assumes the Hausdorff condition, but the result for \v{C}ech-complete Hausdorff spaces implies the same result for arbitrary \v{C}ech-complete spaces.  If $X$ is a \v{C}ech-complete space and $\equiv$ is the relation of topological indistinguishability, then $X / \equiv$ is a \v{C}ech-complete Hausdorff space.  It is routine to check that for any cardinal $\kappa$, if $X / \equiv$ is $\kappa$-Baire then so is $X$.
\end{rem}

\subsection{Metric structures}
Our results in this paper concern model theory for structures based on metric spaces.  In order to keep the paper self-contained, we present here the necessary background material about these structures.  If $(M, d)$ is a metric space and $n \in \omega$, then we consider $M^n$ as a metric space in the metric $d'$ defined by $d'((x_1, \ldots, x_n), (y_1, \ldots, y_n)) = \sup_{1 \leq i \leq n}d(x_i, y_i)$.

\begin{defn}
A \defined{metric structure} $\mc{M}$ consists of the following:
\begin{itemize}
\item{
A metric space $(M, d)$, where the metric $d$ is bounded by $1$,
}
\item{
A set of uniformly continuous functions of the form $f : M^n \to M$,
}
\item{
A set of uniformly continuous functions of the form $P : M^n \to [0, 1]$,
}
\item{
A set of distinguished elements of $M$.
}
\end{itemize}
\end{defn}

We say that the structure $\mc{M}$ is based on the metric space $(M, d)$.  We allow the possibility that the sets of distinguished functions and elements may be empty.  Observe that if $\mc{M}$ is a metric structure where the metric $d$ is the discrete metric, and each $P : M^n \to [0, 1]$ actually takes values in $\{0, 1\}$, then $\mc{M}$ is a structure in the usual sense of first-order logic.  We call such structures \defined{discrete structures}.  For certain structures, such as those based on normed spaces, the requirement that metric structures have metrics bounded by $1$ can be overcome by taking the structure of the unit ball of the normed space, rather than the entire space.  Alternatively, one can introduce a multi-sorted version of metric structures, and then take as sorts scaled versions of the closed balls of integer radius.  

In many examples it is desirable to have metric structures where the underlying metric spaces are complete.  Suppose that $\mc{M}$ is a metric structure based on the metric space $(M, d)$.  Then since all of the distinguished functions are uniformly continuous, they extend uniquely to the completion $(\overline{M}, \overline{d})$ of $(M, d)$.  We denote by $\overline{\mc{M}}$ the structure based on $(\overline{M}, \overline{d})$, with the distinguished functions given by the unique extensions of the distinguished functions of $\mc{M}$, and call this structure the \defined{completion} of $\mc{M}$.

Various special classes of metric structures have been considered in the literature.  For example, metric structures with $1$-Lipschitz functions and predicates are the structures used in \L ukasiewicz-Pavelka logic \cite{Hajek1998}.  Metric structures based on complete metric spaces are the subject of continuous logic \cite{BenYaacov2010}.

The metric structures form the semantic objects for $[0, 1]$-valued model theory.  We now introduce the corresponding syntax.  By a \defined{modulus of continuity} for a uniformly continuous function $f : M^n \to M$ we mean a function $\delta : \mathbb{Q} \cap (0, 1) \to \mathbb{Q} \cap (0, 1)$ such that such that for all $a_1, \ldots, a_n, b_1, \ldots, b_n \in M$ and all $\epsilon \in \mathbb{Q} \cap (0, 1)$,
\[\sup_{1 \leq i \leq n}d(a_i, b_i) < \delta(\epsilon) \implies  d(f(a_i), f(b_i)) \leq \epsilon.\]
Similarly, $\delta$ is a modulus of continuity for $P : M^n \to [0, 1]$ means that for all $a_1, \ldots, a_n, b_1, \ldots, b_n \in M$,
\[\sup_{1 \leq i \leq n}d(a_i, b_i) < \delta(\epsilon) \implies  \abs{P(a_i) - P(b_i)} \leq \epsilon.\]

\begin{defn}
A \defined{metric signature} consists of the following sets, any of which may be empty:
\begin{itemize}
\item{
A set of function symbols, each with an associated arity and modulus of continuity,
}
\item{
A set of predicate symbols, each with an associated arity and modulus of continuity,
}
\item{
A set of constant symbols.
}
\end{itemize}
\end{defn}

If $S$ is a metric signature and $\mc{M}$ is a metric structure, then $\mc{M}$ is a \defined{$S$-structure} if the distinguished functions of $\mc{M}$ have the moduli of continuity of the corresponding symbols of $S$.  If $\mc{M}$ and $\mc{N}$ are $S$-structures we have the natural notion of \defined{substructure}, which we denote by $\mc{M} \subseteq \mc{N}$.

We will need the notion of an ultraproduct of metric structures.  Ultraproducts of metric spaces were introduced independently by Krivine in the context of Banach space theory \cite{Krivine1967}, and by Luxemburg in the context of nonstandard hulls of metric spaces \cite{Luxemburg1969}.  We will only sketch the construction here -- for details, see \cite[\textsection 5]{BenYaacov2008a}.  If $X$ is a topological space, $(x_\alpha)_{\alpha < \kappa}$ is a $\kappa$-sequence of points from $X$, and $\mc{D}$ is an ultrafilter on $\kappa$, then we denote the ultrafilter limit of the $x_\alpha$'s along $\mc{D}$ (when it exists) by $\lim_{\alpha \to \mc{D}}x_\alpha$.  Suppose that $\kappa$ is an infinite cardinal, and that $\tup{M_\alpha, d_\alpha}$ is a metric space of diameter at most $1$ for each $\alpha < \kappa$.  Let $\mc{D}$ be an ultrafilter on $\kappa$.  Define a function $d$ on the cartesian product $\prod_{\alpha < \kappa}M_\alpha$ by
\[d((x_\alpha)_{\alpha < \kappa}, (y_\alpha)_{\alpha < \kappa}) = \lim_{\alpha \to \mc{D}}d_\alpha(x_\alpha, y_\alpha).\]
The function $d$ is a pseudometric on $\prod_{\alpha < \kappa}M_\alpha$.  We define the \defined{ultraproduct} of the $M_\alpha$'s to be the metric space obtained from the pseudometric space $\tup{\prod_{\alpha < \kappa}M_\alpha, d}$ by taking the quotient by the relation $d(x, y) = 0$.  We denote the ultraproduct by $\prod_{\mc{D}}M_\alpha$.

Now suppose that for each $\alpha < \kappa$ we have a uniformly continuous function $f_\alpha : M_\alpha^n \to M_\alpha$, and that there is a single function $\delta$ that is a modulus of uniform continuity for each $f_\alpha$.  For each $1 \leq i \leq n$, write $[x^i]_{\mc{D}}$ for the equivalence class of $(x^i_\alpha)_{\alpha < \kappa}$ in $\prod_{\mc{D}}M_\alpha$.  We define the \defined{ultraproduct} of the $f_\alpha$'s to be the function $f : \left(\prod_{\mc{D}}M_\alpha\right)^n \to \prod_{\mc{D}}M_\alpha$ defined as follows:
\[f([x^1]_{\mc{D}}, \ldots, [x^n]_{\mc{D}}) = [f_\alpha(x^1_\alpha, \ldots, x^n_\alpha)]_{\mc{D}}.\]
It follows from the assumption that $\delta$ is a modulus of uniform continuity for every $f_\alpha$ that $f$ is well-defined and also has $\delta$ as a modulus of uniform continuity.  Using the fact that $\prod_{\mc{D}}[0, 1] = [0, 1]$ for any ultrafilter $\mc{D}$, we can similarly define the ultraproduct of functions $P_\alpha : M_\alpha^n \to [0, 1]$.  Finally, if $a_\alpha \in M_\alpha$ for each $\alpha$, define the ultraproduct of the $a_\alpha$'s to be $[a_\alpha]_{\mc{D}} \in \prod_{\mc{D}}M_\alpha$.  We can now define the ultraproduct of metric structures:

\begin{defn}\label{defnUltraproduct}
Fix a metric signature $S$ and an infinite cardinal $\kappa$.  For each $\alpha < \kappa$, let $\mc{M}_\alpha = \tup{M_\alpha, d_\alpha, \ldots}$ be an $S$-structure.  Then the \defined{ultraproduct} $\prod_{\mc{D}}\mc{M}_\alpha$ is the $S$-structure whose underlying metric space is $\prod_{\mc{D}}M_\alpha$, with each symbol in $S$ interpreted as the ultraproduct of the interpretations in the $\mc{M}_\alpha$'s.
\end{defn}

\section{$\IL$ for metric structures}\label{SectionInfinitary}
In this section we describe a natural analogue of $\mc{L}_{\omega_1, \omega}$ in the $[0, 1]$-valued setting.  Given a metric signature $S$, the \defined{terms} of $S$ are defined exactly as in classical first-order logic.

\begin{defn}
Let $S$ be a metric signature.  An \defined{atomic formula} of $S$ is an expression of one of the following two forms:
\begin{itemize}
\item{
$d(t_1(\overline{x}), t_2(\overline{x}))$, where $t_1$ and $t_2$ are terms,
}
\item{
$P(t_1(\overline{x}), \ldots, t_n(\overline{x}))$, where $t_1, \ldots, t_n$ are terms.
}
\end{itemize}
\end{defn}

\begin{defn}
Let $S$ be a metric signature.  We define the formulas of $\IL(S)$ recursively, as follows:
\begin{enumerate}
\item{
Each atomic formula is a formula of $\IL(S)$.
}
\item{
If $\phi, \psi$ are formulas of $\IL(S)$ then so is $\phi \LI \psi$.
}
\item{
For each $q \in \mathbb{Q} \cap [0, 1]$, $q$ is a formula of $\IL(S)$.
}
\item{
If $\phi_0, \phi_1, \ldots$ are formulas of $\IL(S)$, and the total number of free variables used in all the $\phi_n$'s is finite, then $\sup_{n < \omega}\phi_n$ is a formula.
}
\item{
If $\phi$ is a formula of $\IL(S)$ then so is $\sup_x \phi$.
}
\end{enumerate}
\end{defn}

The semantics for this logic is given by a recursive definition analogous to the definition of satisfaction in the classical setting.  For a metric signature $S$, an $S$-term $t(\overline{x})$, and an $S$-structure $\mc{M}$, we denote by $t^{\mc{M}}$ the function $t^{\mc{M}} : M^n \to M$ determined by $t$.

\begin{defn}
Let $S$ be a metric signature, and $\mc{M}$ an $S$-structure with $\overline{a}$ a tuple from $\mc{M}$.  For $\phi(\overline{x})$ an $S$-formula, we define the \defined{truth value} $\phi^{\mc{M}}(\overline{a})$ as follows:
\begin{itemize}
\item{
If $\phi$ is $d(t_1(\overline{x}), t_2(\overline{x})$ then $\phi^{\mc{M}}(\overline{a}) = d^{\mc{M}}(t_1^{\mc{M}}(\overline{a}), t_2^{\mc{M}}(\overline{a}))$, and similarly if $\phi$ is $P(t_1(\overline{x}), \ldots, t_m(\overline{x}))$.
}
\item{
If $\phi$ is $q$, where $q \in \mathbb{Q} \cap [0, 1]$, then $\phi^{\mc{M}} = q$.
}
\item{
If $\phi$ is $\psi \LI \theta$, then $\phi^{\mc{M}}(\overline{a}) = \min\set{1-\psi^{\mc{M}}(\overline{a}) + \theta^{\mc{M}}(\overline{a}), 1}$.
}
\item{
If $\phi$ is $\sup_{n < \omega}\phi_n$, then $\phi^{\mc{M}}(\overline{a}) = \sup_{n < \omega}\phi^{\mc{M}}(\overline{a})$.
}
\item{
If $\phi$ is $\sup_y\psi(y, \overline{x})$, then $\psi^{\mc{M}}(\overline{a}) = \sup_{y \in M}\psi^{\mc{M}}(y, \overline{a})$.
}
\end{itemize}
\end{defn}

Note that we always have $\phi^{\mc{M}}(a_1, \ldots, a_n) \in [0, 1]$.  We write $\mc{M} \models \phi(a_1, \ldots, a_n)$ to mean $\phi^{\mc{M}}(a_1, \ldots, a_n) = 1$.  Note that we do not have negation in the classical sense; for an arbitrary sentence $\sigma$, there may be no sentence $\simneg\sigma$ such that for each $\mc{M}$ we have $\mc{M} \models \simneg\sigma$ if and only if $\mc{M} \not\models \sigma$.  In particular, we can express ``$x=y$" by the formula ``$1-d(x,y)$", but we cannot in general express ``$x\neq y$". 

The interpretation of the connective $\LI$ will be particularly important for us later, because of the following observation:
\[\mc{M} \models \phi(\overline{a}) \to \psi(\overline{a}) \iff \phi^{\mc{M}}(\overline{a}) \leq \psi^{\mc{M}}(\overline{a}).\]

The connective $\LI$ allows us to define the connectives commonly used in first-order continuous logic.  We define $\neg$, $\wedge$, and $\vee$ as follows:
\[\neg\phi = \phi \LI 0, \qquad \phi \vee \psi = (\phi \to \psi) \to \psi, \qquad \phi \wedge \psi = \neg(\neg \phi \vee \neg \psi).\]
Then it is easy to check that $\mc{M} \models \phi \wedge \psi$ if and only if $\mc{M} \models \phi$ and $\mc{M} \models \psi$.  Similarly, $\vee$ corresponds to disjunction, and $(\neg\phi)^{\mc{M}} = 1 - \phi^{\mc{M}}$.  We also have the truncated addition, defined as $\phi \dotplus \psi = \phi \LI \neg\psi$.  

\begin{rem}
Using only the \L ukasiewicz implication $\LI$ and the rational constants we can obtain the function $x \mapsto \frac{x}{2}$ as a limit (see \cite[Proposition 1.18]{Caicedo2012}):
\[\frac{1}{2}x = \lim_{n\to\infty}\bigvee_{i=1}^{n}\left(\frac{i}{n} \wedge \neg(x \LI \frac{i}{n})\right).\]
Once we have the connective $\frac{x}{2}$ we obtain all dyadic rational multiples of $x$ by using $\dotplus$.  It then follows by the Stone-Weierstrass Theorem for lattices (see \cite{Gillman1976}) that every continuous $F : [0, 1]^n \to [0, 1]$ can be uniformly approximated by functions on our list of connectives.
\end{rem}

\begin{rem}
The connectives we have chosen are the same as those used in \L ukasiewicz-Pavelka logic.  Pavelka added the rational constant connectives to \L ukasiewicz logic and proved a completeness theorem for the resulting \L ukasiewicz-Pavelka logic \cite{Pavelka1979, Pavelka1979a, Pavelka1979b}.  Later, H\'ajek, Paris, and Shepherdson proved that \L ukasiewicz-Pavelka logic is a conservative extension of \L ukasiewicz logic \cite{Hajek2000}.  The observations above show that the expressive power of \L ukasiewicz-Pavelka logic is the same as the expressive power of first-order continuous logic.
\end{rem}

In the setting of metric structures based on complete metric spaces, an infinitary $[0, 1]$-valued logic similar to ours was used in \cite{benYaacov2009}, but with additional technical requirements on the moduli of continuity of the $\phi_n$ when forming $\sup_{n<\omega}\phi_n$.

We think of $\sup_{n < \omega}\phi_n$ as an approximate infinitary disjunction of the $\phi_n$'s, since $\mc{M} \models \sup_{n < \omega}\phi_n$ if and only if for every $\epsilon \in \mathbb{Q} \cap (0, 1)$ there is $n < \omega$ such that $\phi_n^{\mc{M}} > \epsilon$.  We sometimes write $\bigvee_n \phi_n$ instead of $\sup_n \phi_n$.   Similarly, we think of $\sup_y\psi$ as an approximate version of $\exists y\,\psi$, and sometimes write $\exists y$ instead of $\sup_y$.  We define $\inf_n\phi_n$ (or $\bigwedge_n \phi_n$) as an abbreviation for $\neg\sup_n\neg\phi_n$, and define $\inf_y \psi$ (or $\forall y\,\psi$) as an abbreviation for $\neg\sup_y\neg\psi$.  Both of these abbreviations have the expected semantics.  Moreover, $\mc{M} \models \bigwedge_n \phi_n$ if and only if $\mc{M} \models \phi_n$ for every $n$, and similarly $\mc{M} \models \forall y\,\phi(y)$ if and only if $\mc{M} \models \phi(a)$ for every $a \in \mc{M}$.

The following is a partial list of classes of structures from analysis that can be axiomatized in our $\IL$:
\begin{itemize}
\item{
All classes of structures axiomatizable in finitary continuous logic.  In the signature of lattices the class of Banach lattices isomorphic to $L_p(\mu)$ for a fixed $1 \leq p < \infty$ and measure $\mu$ is axiomatizable, by results from \cite{Bretagnolle1965/1966}, \cite{Dacunha-Castelle1972}.  The class of Banach spaces isometric to $L_p(\mu)$ is also axiomatizable in the signature of Banach spaces (see \cite{Henson1976}), as is the class of Banach spaces isometric to $C(K)$ for a fixed compact Hausdorff space $K$ (see \cite{Heinrich1981}).  Further examples are described in \cite[Chapter 13]{Henson2003}.  More recent examples include subclasses of the class of Nakano spaces \cite{Poitevin2008}.
}
\item{
In any signature with countably many constants $(c_i)_{i < \omega}$, the statement that the constants form a dense set can be expressed by the following sentence:
\[\forall x \bigvee_{i < \omega}(d(x, c_i) = 0).\]
}
\item{
In the signature of normed spaces with countably many new constants $(c_i)_{i < \omega}$, the following formula $\phi(x)$ expresses that $x \in \overline{\op{span}}\set{c_i : i < \omega}$:
\[\phi(x) : \bigvee_{n < \omega} \bigvee_{a_0 \in \mathbb{Q} \cap (0, 1)} \cdots \bigvee_{a_{n-1} \in \mathbb{Q} \cap (0, 1)} \left(\norm{x - \sum_{i < n}a_ic_i} = 0\right).\]
We can express that $(c_i)_{i < \omega}$ is a $\lambda$-basic sequence for a fixed $\lambda$ with the sentence $\sigma_\lambda$:
\[\sigma_\lambda : \bigwedge_{N < \omega}\bigwedge_{a_0 \in \mathbb{Q} \cap (0, 1)} \cdots \bigwedge_{a_{N-1} \in \mathbb{Q} \cap (0, 1)}\left(\max_{n \leq N}\norm{\sum_{j=1}^n a_jc_j} \leq \lambda\norm{\sum_{j=1}^{N}a_jc_j}\right).\]
We can therefore express that $(c_i)_{i < \omega}$ is a Schauder basis:
\[\left(\forall x \phi(x)\right) \wedge \bigvee_{\lambda \in \mathbb{Q}}\sigma_\lambda.\]
Note that this cannot be expressed in the finitary fragment of $\IL$, since having a Schauder basis implies separability, and every separable Banach space is elementarily equivalent (in the finitary fragment) to a non-separable space.

The same ideas as in the above example allow us to express that $X$ (or equivalently, $X^*$) is not super-reflexive -- see \cite[Theorem 3.22]{Pisier}.
}
\item{
In the signature of normed spaces with an additional predicate $\vertiii{\cdot}$, we can express that $\norm{\cdot}$ and $\vertiii{\cdot}$ are equivalent by the axioms for $\vertiii{\cdot}$ being a norm, plus the sentence:
\[\bigvee_{C \in \mathbb{Q}} \bigvee_{D \in \mathbb{Q}} \forall x \left(C\norm{x} \leq \vertiii{x} \leq D\norm{x}\right).\]
}
\item{
In the signature of Banach spaces augmented with two new sorts $Y, Z$ for closed (infinite-dimensional) subspaces, the following expresses that $Y$ and $Z$ witness the failure of hereditary indecomposability (see \cite[Proposition 1.1]{Argyros2004}):
\[\bigvee_{\delta \in \mathbb{Q} \cap (0, 1)} \forall y \in Y \forall z \in Z \left(\norm{y-z} \geq \delta \norm{y+z}\right).\]
}
\item{
Failures of reflexivity can be expressed as follows.  Beginning with a two-sorted signature, each sort being the signature for Banach spaces, add countably many constants $(c_i)_{i < \omega}$ to the first sort, and $(c_i^*)_{i < \omega}$ to the second sort.  Let $S$ be the signature obtained by then adding a relation symbol $F$ for the natural pairing on $X \times X^*$.  Then in structures $(X, X^*)$, the following expresses that the constants witness the non-reflexivity of $X$ (see \cite[Theorem 3.10]{Pisier}):
\[\bigvee_{\theta \in \mathbb{Q} \cap (0, 1)} \bigwedge_{j < \omega}\left(\bigwedge_{i < j} F(c_i, c_j^*) = 0\right) \wedge \left(\bigwedge_{j \leq i < \omega} F(c_i, c_j^*) = \theta\right).\]

This example cannot be expressed in the finitary fragment of $\IL$, since it is known that there are reflexive Banach spaces with non-reflexive ultrapowers.
}
\item{
The failure of a Banach space to be stable, in the sense of Krivine and Maurey \cite{Krivine1981}, can be axiomatized in the signature of normed spaces with constants $(c_i)_{i < \omega}$ and $(d_i)_{i < \omega}$ as follows:
\[\bigvee_{\epsilon \in \mathbb{Q} \cap (0, 1)} \bigvee_{j < \omega} \bigvee_{i < j} \abs{\norm{c_i - d_j} - \norm{c_j - d_i}} \geq \epsilon.\]
More generally, we may replace $\norm{x-y}$ with any formula $\phi(x, y)$ to express that $\phi$ is not stable (see \cite{BenYaacov2010}).  It is well-known that stability is not axiomatizable in finitary logic.
}
\end{itemize}

\subsection{Fragments of $\IL$}
In the discrete setting the Omitting Types Theorem does not hold for $\IL$, but does hold for countable fragments of $\IL$.  Since our goal is to obtain a $[0, 1]$-valued version of the Omitting Types Theorem, we introduce the notion of a fragment of our $[0, 1]$-valued $\IL$.

\begin{defn}
Let $S$ be a metric signature.  A \defined{fragment} of $\IL(S)$ is a set $L$ of $\IL(S)$-formulas with the following properties:
\begin{itemize}
\item{
Every atomic formula is in $L$.
}
\item{
For each $q \in \mathbb{Q} \cap [0, 1]$, the constant formula $q$ is in $L$.
}
\item{
$L$ is closed under $\sup_x$, $\LI$, and $\neg$.
}
\item{
$L$ is closed under substituting terms for variables.
}
\end{itemize}
\end{defn}

It is easy to see that given any countably many formulas of $\IL$ there is a smallest countable fragment containing all of them.  Note also that every fragment contains every finitary formula.  If $L$ is a fragment of $\IL$, and $\mc{M}, \mc{N}$ are $L$-structures, we write $\mc{N} \equiv_L \mc{M}$ to mean $\sigma^{\mc{M}} = \sigma^{\mc{N}}$ for all $L$-sentences $\sigma$.  We write $\mc{N} \preceq_L \mc{M}$ if $\mc{N} \subseteq \mc{M}$ and for all $L$-formulas $\phi(x_1, \ldots, x_n)$ and all $a_1, \ldots, a_n \in \mc{N}$, $\phi^{\mc{N}}(a_1, \ldots, a_n) = \phi^{\mc{M}}(a_1, \ldots, a_n)$.

We will need the following version of the Downward L\"owenheim-Skolem Theorem.  The full logic $\IL$ does not satisfy Downward L\"owenheim-Skolem, but for countable fragments the standard proof from the first-order case adapts easily.

\begin{prop}[Downward L\"owenheim-Skolem]\label{propDownLS}
Let $L$ be a countable fragment of $\IL$, and let $\mc{M}$ be an $L$-structure.  Let $A \subseteq \mc{M}$ be a countable set of elements of $\mc{M}$.  Then there is a countable $L$-structure $\mc{N}$ such that $\mc{N} \preceq_L \mc{M}$ and $A \subseteq \mc{N}$.
\end{prop}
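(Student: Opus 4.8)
The plan is to adapt the standard Tarski--Vaught/Skolem-hull construction from the discrete first-order case, the only essential modification being that the value $\sup_{y \in M}\psi^{\mc{M}}(y, \overline{a})$ need not be attained by any single $y \in M$; I will therefore witness each existential formula \emph{approximately}. First, since a fragment need not be closed under taking subformulas, let $L'$ be the set of all subformulas of members of $L$. Each $\IL$-formula has only countably many subformulas (by induction on its rank, using that $\sup_{n<\omega}\phi_n$ contributes a countable union of countably many subformula-sets), so $L'$ is countable, contains $L$, and is closed under subformulas. I note also that, because $L$ is countable and contains every atomic formula, the signature $S$ has only countably many symbols, so $M$-subsets are closed under the (finitary) functions of $S$ without leaving the countable realm.

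Next I would build an increasing chain $A = A_0 \subseteq A_1 \subseteq \cdots$ of countable subsets of $M$. Given the countable set $A_k$, for every formula of $L'$ of the form $\sup_y \psi(y, \overline{x})$, every tuple $\overline{a}$ from $A_k$ of the appropriate length, and every $m < \omega$, use the axiom of choice to select a single element $b \in M$ with
\[\psi^{\mc{M}}(b, \overline{a}) \geq \left(\sup_{y \in M}\psi^{\mc{M}}(y, \overline{a})\right) - \tfrac{1}{m+1},\]
and let $A_{k+1}$ be the closure, under the functions and constants of $S$, of $A_k$ together with all such elements $b$. Since $L'$, each $A_k$, the signature, and $\omega$ are all countable, each $A_{k+1}$ is countable. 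Put $N = \bigcup_{k < \omega}A_k$. As $N$ is closed under the functions and constants of $S$, it is the universe of a substructure $\mc{N} \subseteq \mc{M}$, in which every distinguished function inherits its modulus of continuity from $\mc{M}$ and the metric remains bounded by $1$; thus $\mc{N}$ is a countable $S$-structure with $A \subseteq \mc{N}$. (I do not pass to the completion, so $\mc{N}$ need not be complete, but the proposition does not demand this.)

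It then remains to prove $\mc{N} \preceq_L \mc{M}$, which I would obtain by showing, by induction on complexity, that $\phi^{\mc{N}}(\overline{a}) = \phi^{\mc{M}}(\overline{a})$ for every $\phi \in L'$ and every tuple $\overline{a}$ from $N$; restricting to $\phi \in L$ then gives the claim. The atomic and constant cases are immediate, since terms are interpreted in $\mc{N}$ by the restrictions of their interpretations in $\mc{M}$. The cases $\phi = \psi \LI \theta$ and $\phi = \neg\psi$ follow because these connectives are interpreted by fixed continuous functions of the truth values of the immediate subformulas, which agree by the induction hypothesis, and $\phi = \sup_{n<\omega}\phi_n$ follows from $\phi^{\mc{M}}(\overline{a}) = \sup_n \phi_n^{\mc{M}}(\overline{a}) = \sup_n \phi_n^{\mc{N}}(\overline{a}) = \phi^{\mc{N}}(\overline{a})$. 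In the quantifier case $\phi = \sup_y \psi(y, \overline{x})$ the inequality
\[\phi^{\mc{N}}(\overline{a}) = \sup_{b \in N}\psi^{\mc{N}}(b, \overline{a}) = \sup_{b \in N}\psi^{\mc{M}}(b, \overline{a}) \leq \sup_{b \in M}\psi^{\mc{M}}(b, \overline{a}) = \phi^{\mc{M}}(\overline{a})\]
uses the induction hypothesis for $\psi$ together with $N \subseteq M$; for the reverse inequality, $\overline{a}$ lies in some $A_k$, so for each $m$ the construction placed in $N$ a witness $b$ with $\psi^{\mc{M}}(b, \overline{a}) \geq \phi^{\mc{M}}(\overline{a}) - \tfrac{1}{m+1}$, whence $\sup_{b \in N}\psi^{\mc{M}}(b, \overline{a}) \geq \phi^{\mc{M}}(\overline{a})$.

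The main obstacle, and the only genuine departure from the classical argument, is precisely this quantifier case: because the supremum defining $\sup_y \psi$ may fail to be attained, no single Skolem witness suffices, and one must instead ensure a countable family of witnesses approaching the supremum to within each $\tfrac{1}{m+1}$. Controlling this while keeping the hull countable is exactly what relies on the signature, and hence the set of atomic formulas, being countable, which is guaranteed by the hypothesis that $L$ is a countable fragment.
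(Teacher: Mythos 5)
Your proof is correct, and it is essentially the argument the paper intends: the paper gives no proof of this proposition, remarking only that ``for countable fragments the standard proof from the first-order case adapts easily,'' and your approximate-witness Tarski--Vaught hull (replacing exact Skolem witnesses by witnesses within $\tfrac{1}{m+1}$ of each supremum) is exactly that adaptation. Your additional care in closing $L$ under subformulas and in observing that countability of $L$ forces countability of the signature correctly fills in the details the paper leaves to the reader.
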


It will occasionally be useful for us to restrict the formulas in a fragment to those which define continuous functions on each structure.

\begin{defn}\label{defnFragmentCts}
A fragment $L$ of $\IL$ is \defined{continuous} if for every $L$-formula $\phi(x_1, \ldots, x_n)$ and every $L$-structure $\mc{M}$, the function from $\mc{M}^n$ to $[0, 1]$ defined by $\overline{a} \mapsto \phi^{\mc{M}}(\overline{a})$ is continuous.
\end{defn}

In \cite{benYaacov2009} syntactic conditions on formulas are presented that ensure continuity, so their infinitary logic is an example of a continuous fragment of our $\IL$.  It is easy to see that if $L$ is any continuous fragment then for every $L$-structure $\mc{M}$, $\mc{M} \preceq_L \overline{\mc{M}}$.  

\section{The logic topology}\label{SectionTopology}
We now describe a topological space associated with a given fragment of $\IL$.  A space similar to the one presented here was used by Caicedo and Iovino in \cite{Caicedo2012} to study omitting types in the context of abstract $[0, 1]$-valued logics, and for the finitary part of our $\IL$ in particular.

\begin{defn}
Let $S$ be a metric signature, and let $L$ be a fragment of $\IL(S)$.  Let $\op{Str}(L)$ denote the class of all countable $L$-structures.  For each $L$-theory $T$, define
\[\op{Mod}(T) = \set{\mc{M} \in \op{Str}(L) : \mc{M} \models T}.\]
\end{defn}

Observe that the collection of classes of the form $\op{Mod}(T)$ is closed under finite unions and arbitrary intersections (the latter uses that $L$ is closed under $\wedge$ and $\vee$).  Also, $\op{Str}(L) = \op{Mod}(\emptyset)$.  Hence the collection of complements of classes of the form $\op{Mod}(T)$ forms a topology on $\op{Str}(L)$.  This is the topological space that we will use to prove the Omitting Types Theorem.

\begin{defn}
The \defined{logic topology} on $\op{Str}(L)$ is the topology where the closed classes are exactly those of the form $\op{Mod}(T)$ for some $L$-theory $T$. 
\end{defn}

For any $L$-sentence $\sigma$, the function from $\op{Str}(L)$ to $[0, 1]$ defined by $\mc{M} \mapsto \sigma^{\mc{M}}$ is continuous.  This is because for each $r \in \mathbb{Q} \cap (0, 1)$ we have that $\sigma \leq r$ and $r \leq \sigma$ are $L$-sentences (see the discussion of the semantics of $\LI$ in Section~\ref{SectionInfinitary}).  We therefore have $\sigma^{-1}([r, s]) = \op{Mod}(\sigma \geq r \wedge \sigma \leq s)$ for every $r, s \in \mathbb{Q} \cap [0, 1]$.  It follows immediately from the definition of the logic topology that functions defined by sentences in this way are sufficient to separate points from closed classes.  As a result, $\op{Str}(L)$ is a completely regular space.

On the other hand, we also have that if $\mc{M}, \mc{N} \in \op{Str}(L)$, then $\mc{M}, \mc{N}$ are topologically indistinguishable in the logic topology if and only if $\mc{M} \equiv_L \mc{N}$.  As a result, the logic topology is not $T_0$.  It is possible to create a Hausdorff space that shares many of the properties of $\op{Str}(L)$ by taking the quotient of $\op{Str}(L)$ by the elementary equivalence relation, but for our purposes it is simpler to work directly with $\op{Str}(L)$ and its subspaces.

\begin{rem}
Our definition of $\op{Str}(S)$ raises certain foundational issues.  The logic topology is defined as a collection of proper classes, and thus is problematic from the point of standard axiomatizations of set theory, such as ZFC.  There are two natural ways to overcome this difficulty.  The first is to replace the class of all $L$-structures by the set of all complete $L$-theories.  Informally, this is equivalent to working with the quotient $\op{Str}(L) / \equiv_L$ mentioned above.  An alternative approach is to notice that every structure we need in the proof of Theorem~\ref{thmOmittingTypesStatement} is of cardinality at most $2^{\aleph_0}$.  We could then use Scott's trick (see e.g. \cite[9.3]{Jech1978}) to select one representative from each isomorphism class of $L$-structures of cardinality at most $2^{\aleph_0}$, and then replace the class of all $L$-structures by the set of these chosen representatives.  In what follows we will use $\op{Str}(L)$ as originally presented, as the reader will have no difficulty translating our arguments into either of these two approaches.
\end{rem}

\section{Proof of omitting types}\label{SectionOmittingTypes}
We fix, for the entirety of this section, a metric signature $S$ and a countable fragment $L$ of $\IL(S)$.  If $C$ is any set of new constant symbols, we denote by $L_C$ the smallest fragment of $\IL(S \cup C)$ containing $L$.  Note that if $C$ is countable then $L_C$ is a countable fragment of $\IL(S \cup C)$.  The sentences of $L_C$ are exactly those sentences of the form $\phi(c_1, \ldots, c_n)$ for some $c_1, \ldots, c_n \in C$ and $\phi(x_1, \ldots, x_n) \in L$.  If $D$ is a set of constant symbols with $C \subseteq D$ and $T$ is an $L_C$-theory, we write $\op{Mod}_{L_D}(T) = \set{\mc{M} \in \op{Str}(L_D) : \mc{M} \models T}$ and $\op{Mod}_{L_C}(T) = \set{\mc{M} \in \op{Str}(L_C) : \mc{M} \models T}$ when necessary to avoid ambiguity.  If $\mc{M}$ is an $L$-structure and $\overline{a} = \set{a_i : i < \omega}$ is a set of elements of $\mc{M}$, then the $L_C$ structure obtained from $\mc{M}$ by interpreting $c_i$ as $a_i$ is denoted by $\tup{\mc{M}, \overline{a}}$.

We now fix a countable set $C = \set{c_0, c_1, \ldots}$ of new constant symbols and an enumeration $\set{\phi_0(x), \phi_1(x), \ldots}$ of the $L_C$-formulas in exactly one free variable $x$.  We will primarily work in the following subspace of $\op{Str}(L_C)$:

\[\mc{W} = \bigcap_{i < \omega} \bigcap_{r \in \mathbb{Q} \cap (0, 1)} \left(\op{Mod}_{L_C}(\sup_x \phi_i(x) < 1) \cup \bigcup_{j < \omega}\op{Mod}_{L_C}(\phi_i(c_j) > r)\right).\]

The following remark states the main property of $\mc{W}$ that we will use.

\begin{rem}\label{remWProperty}
If $\tup{\mc{M}, \overline{a}} \in \mc{W}$ and $\mc{M} \models \sup_x\phi(x)$, then for each $\epsilon \in \mathbb{Q} \cap (0, 1)$ there is $j < \omega$ such that $\tup{\mc{M}, \overline{a}} \models \phi(c_j) \geq \epsilon$.  More generally, it follows from the fact that we can express inequalities in our formulas that if $(\sup_x\phi(x))^{\tup{\mc{M}, \overline{a}}} > r$ then there exists $r' \in \mathbb{Q} \cap (r, 1)$ and $j < \omega$ such that $\tup{\mc{M}, \overline{a}} \models \phi(c_j) \geq r'$.
\end{rem}

The preceding remark gives the following version of the Tarski-Vaught test for structures in $\mc{W}$.  The proof, which is a straightforward induction on the complexity of formulas, is left to the reader.

\begin{lem}\label{lemElementary}
If $\tup{\mc{M}, \overline{a}} \in \mc{W}$, then $\mc{M} \upharpoonright \tup{\overline{a}} \preceq_L \mc{M}$, where $\mc{M} \upharpoonright \tup{\overline{a}}$ is the substructure of $\mc{M}$ generated by $\overline{a}$.
\end{lem}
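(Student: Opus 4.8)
The plan is to prove Lemma~\ref{lemElementary} by the Tarski--Vaught-style argument indicated in the text, namely by verifying that the substructure $\mc{M} \upharpoonright \tup{\overline{a}}$ generated by the interpreted constants is in fact an $L$-elementary substructure. The key structural fact I would exploit is that membership in $\mc{W}$ guarantees that every existential quantifier witnessed in $\mc{M}$ is already witnessed (approximately, hence exactly in the sup) by one of the constants $c_j$, i.e.\ by an element of $\mc{M} \upharpoonright \tup{\overline{a}}$. This is precisely what Remark~\ref{remWProperty} packages for us.

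**First I would** set $N = \mc{M} \upharpoonright \tup{\overline{a}}$, the substructure generated by $\set{a_i : i < \omega}$, and observe that since $L$ is closed under substituting terms for variables and the $\phi_i$ enumerate \emph{all} $L_C$-formulas in one free variable, every element of $N$ is the interpretation of some term in the constants $c_j$. The goal is to show $\phi^{N}(\overline{b}) = \phi^{\mc{M}}(\overline{b})$ for every $L$-formula $\phi(\overline{x})$ and every tuple $\overline{b}$ from $N$. I would proceed by induction on the complexity of $\phi$. The atomic case is immediate because $N$ is a substructure, so the interpretations of $d$, the predicate symbols, and all terms agree. The cases for the rational constants $q$ and the \L ukasiewicz implication $\LI$ are trivial, since these truth values are computed pointwise from the truth values of immediate subformulas, for which the inductive hypothesis applies. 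The countable $\sup_{n<\omega}\phi_n$ case is likewise routine: a supremum over $\omega$ of quantities that agree in $N$ and $\mc{M}$ agrees.

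**The hard part will be** the quantifier step $\phi = \sup_y \psi(y, \overline{x})$, which is exactly where membership in $\mc{W}$ is used. Here I would argue as follows. Since $N \subseteq \mc{M}$, the inequality $(\sup_y\psi)^{N}(\overline{b}) \leq (\sup_y\psi)^{\mc{M}}(\overline{b})$ is automatic. For the reverse inequality, suppose $(\sup_y\psi)^{\mc{M}}(\overline{b}) > r$ for some rational $r$. Writing $\overline{b}$ as the interpretation of terms in the constants, the formula $\sup_y\psi(y,\overline{b})$ is (after term substitution, using closure of the fragment) of the form $\sup_y \chi(y)$ for an $L_C$-formula $\chi$ in one free variable, hence equal to some $\phi_i$ in our fixed enumeration. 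Remark~\ref{remWProperty} then yields $r' \in \mathbb{Q}\cap(r,1)$ and $j<\omega$ with $\tup{\mc{M},\overline{a}} \models \chi(c_j) \geq r'$, that is $\chi^{\mc{M}}(a_j) \geq r'$. Since $a_j \in N$, the inductive hypothesis applied to $\chi$ (equivalently $\psi$ with the witness substituted) gives $\chi^{N}(a_j) = \chi^{\mc{M}}(a_j) \geq r' > r$, whence $(\sup_y\psi)^{N}(\overline{b}) > r$. As $r$ was arbitrary, the two suprema coincide, completing the induction.

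**The main obstacle** I anticipate is purely bookkeeping rather than conceptual: one must be careful that after substituting the terms representing $\overline{b}$ into $\psi$, the resulting object is genuinely a single-free-variable $L_C$-formula appearing in the fixed enumeration $\set{\phi_i(x)}$, so that Remark~\ref{remWProperty} applies verbatim. This is guaranteed by the closure of $L$ under term substitution together with the fact that $N$ is generated by the $a_j$, so that each $b_k$ is a term in the $c_j$; but it is the one place where the induction interlocks with the definition of $\mc{W}$, and it is why the quantifier case cannot be separated from the substructure-generation hypothesis. The remaining verifications are the promised straightforward induction, and I would leave the atomic, constant, connective, and countable-sup cases to the reader exactly as the text does.
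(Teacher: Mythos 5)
Your proof is correct and is precisely the argument the paper intends: the paper leaves Lemma~\ref{lemElementary} to the reader as ``a straightforward induction on the complexity of formulas,'' and you carry out exactly that Tarski--Vaught-style induction, invoking Remark~\ref{remWProperty} at the $\sup_y$ step and handling the term-substitution bookkeeping needed to place $\sup_y\psi(y,\overline{b})$ in the enumeration of one-variable $L_C$-formulas.
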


We note that $\mc{W}$ is non-empty, since given any countable $L$-structure $\mc{M}$ we may interpret $C$ as an enumeration $\overline{a}$ of $\mc{M}$ to obtain $\tup{\mc{M}, \overline{a}} \in \mc{W}$.

There are two parts to the proof of our main result.  First, in Section~\ref{subsectionComplete} we show that $\mc{W}$ is \v{C}ech-complete.  Then in Section~\ref{subsectionProof} we relate the model-theoretic notion of principal types to Baire category in $\mc{W}$, and use this to prove the Omitting Types Theorem.  Section~\ref{subsectionCompleteStructures} is concerned with adapting our proofs to the context of structures based on complete metric spaces.

\subsection{\v{C}ech-completeness of $\mc{W}$}\label{subsectionComplete}
Fix an enumeration $\set{\sigma_0, \sigma_1, \ldots}$ of the $L_C$-sentences such that $\sigma_0$ is an atomic sentence.  To prove that $\mc{W}$ is \v{C}ech-complete we must show that it has a complete sequence of open covers (see Definition~\ref{defnCechComplete}).  In fact there are many such sequences; the following lemma gives the existence of a sequence with the properties we will need.  By an \defined{open rational interval} in $[0, 1]$, we mean an interval $I \subseteq [0, 1]$ with rational endpoints that is open in the subspace topology on $[0, 1]$.

\begin{lem}\label{lemCompleteSequence}
There exists a sequence $\tup{\mc{U}_n : n < \omega}$ of open covers of $\mc{W}$ with the following properties:
\begin{enumerate}
\item{
For every $n$ and every $\epsilon > 0$ there is $l \geq n$ such that for each $U \in \mc{U}_l$ there is a rational open interval $I_U$ with $\op{length}(I_U) \leq \epsilon$ such that for all $\mc{N} \in U$, $\sigma_n^{\mc{N}} \in I_U$.
}
\item{
For every $n$, if $k \leq n$ is such that $\sigma_k = \sup_{i < \omega}\chi_i$, then for each $U \in \mc{U}_n$ there is a rational open interval $I$ in $[0, 1]$, and a $j < \omega$, such that for all $\mc{N} \in U$, $(\sup_i\chi_i)^{\mc{N}} \in I$ and $\chi_j^{\mc{N}} \in I$.
}
\item{
For every $n$, if $k \leq n$ is such that $\sigma_k = \sup_x\phi$, then for each $U \in \mc{U}_n$ there is a rational open interval $I$ in $[0, 1]$ and a $j < \omega$ such that for all $\mc{N} \in U$, $(\sup_x\phi)^{\mc{N}} \in I$ and $\phi(c_j)^{\mc{N}} \in I$.
}
\end{enumerate}
\end{lem}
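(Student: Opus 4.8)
The goal is to construct, for each $n<\omega$, an open cover $\mc{U}_n$ of $\mc{W}$ meeting the three listed requirements simultaneously. I would build $\mc{U}_n$ as a common refinement of finitely many covers, one for each constraint that must be satisfied at stage $n$: a ``fineness'' cover guaranteeing property~(1) and, for each $k\leq n$ with $\sigma_k$ of the form $\sup_i\chi_i$ or $\sup_x\phi$, a ``witnessing'' cover guaranteeing property~(2) or~(3). Since each of these is a finite list of open covers, and the collection of covers is closed under finite common refinement, the resulting $\mc{U}_n$ will still be an open cover of $\mc{W}$ satisfying all the stated conditions at stage $n$.

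\textbf{Building the basic covers.}
For property~(1), the key point is that for each sentence $\sigma_m$ the map $\mc{N}\mapsto\sigma_m^{\mc{N}}$ is continuous into $[0,1]$, so the preimages of the rational open intervals of length at most $\epsilon$ form an open cover $\mc{V}(m,\epsilon)$ of $\op{Str}(L_C)$; restricting to $\mc{W}$ gives an open cover on which $\sigma_m$ varies by at most $\epsilon$ on each piece. Taking $l=n$ and using $\mc{V}(n,\epsilon)$ already witnesses~(1). For properties~(2) and~(3) I would exploit Remark~\ref{remWProperty}: on $\mc{W}$, whenever $(\sup_x\phi)^{\mc{N}}>r$ there is a rational $r'>r$ and some $j$ with $\phi(c_j)^{\mc{N}}\geq r'$, and symmetrically when the value is small the witnesses are uniformly small. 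Concretely, for $\sigma_k=\sup_x\phi$ I would cover $\mc{W}$ by the sets
\[
\set{\mc{N}\in\mc{W} : (\sup_x\phi)^{\mc{N}}\in I \text{ and } \phi(c_j)^{\mc{N}}\in I}
\]
as $I$ ranges over short rational intervals and $j$ over $\omega$; Remark~\ref{remWProperty} is exactly what guarantees that every $\mc{N}\in\mc{W}$ lands in one of these, i.e.\ that this family really is a cover. Each such set is open because it is an intersection of preimages of open intervals under the continuous sentence-valuation maps $\mc{N}\mapsto(\sup_x\phi)^{\mc{N}}$ and $\mc{N}\mapsto\phi(c_j)^{\mc{N}}$. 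The case $\sigma_k=\sup_i\chi_i$ is handled identically, using the first half of Remark~\ref{remWProperty} (and the general inequality version) to produce the witnessing index $j$.

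\textbf{The main obstacle.}
The delicate part is verifying that the witnessing families for properties~(2) and~(3) are genuinely \emph{covers} of $\mc{W}$, rather than merely covers of a dense subset. This is where the definition of $\mc{W}$ does all the work: a structure lying outside $\mc{W}$ could satisfy $\sup_x\phi>r$ while having every $\phi(c_j)$ bounded away from $r$, so that no witnessing piece contains it. Remark~\ref{remWProperty} rules this out precisely on $\mc{W}$, and Lemma~\ref{lemElementary} is the conceptual reason it holds. I would therefore treat the careful bookkeeping of intervals---choosing $I$ short enough to sit inside a prescribed $\epsilon$-band, while still containing both $(\sup_x\phi)^{\mc{N}}$ and the witnessing value $\phi(c_j)^{\mc{N}}$---as the technical heart of the argument. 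Finally, since only finitely many constraints ($k\leq n$, plus fineness) enter at each stage, taking the common refinement of the corresponding covers yields $\mc{U}_n$, and the construction proceeds for all $n<\omega$.
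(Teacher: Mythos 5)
There is a genuine gap, and it concerns property (1). The sequence $\tup{\mc{U}_n : n < \omega}$ must be fixed in advance, and only then are the quantifiers of (1) evaluated: for every $n$ and \emph{every} $\epsilon > 0$ there must exist $l \geq n$ such that every $U \in \mc{U}_l$ pins $\sigma_n$ into an interval of length at most $\epsilon$. Your construction imposes a fineness constraint on $\sigma_n$ only at stage $n$ itself, via a single cover $\mc{V}(n,\epsilon)$ whose parameter is never actually fixed; the claim that ``taking $l = n$ and using $\mc{V}(n,\epsilon)$ already witnesses (1)'' silently lets the cover $\mc{U}_n$ depend on $\epsilon$, i.e.\ it inverts the quantifiers. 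Once $\mc{U}_n$ is built with some fixed fineness $\epsilon_0$ for $\sigma_n$, no later stage of your construction ever constrains $\sigma_n$ again (your stage-$l$ covers control only $\sigma_l$ and the witnessing pairs for sup-form sentences $\sigma_k$, $k \leq l$), so (1) fails for $\epsilon < \epsilon_0$ --- most visibly for an atomic $\sigma_n$, which acquires no witnessing constraints at any stage. Nor can you repair this by refining at stage $n$ over all $\epsilon$ simultaneously: the resulting pieces would be preimages of single points under $\mc{N} \mapsto \sigma_n^{\mc{N}}$, which need not be open, so the infinite common refinement is no longer an open cover.

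The missing ingredient is exactly the feature of the paper's recursive construction that your plan omits: at stage $l$, \emph{every} sentence $\sigma_k$ with $k \leq l$ is re-constrained to an interval drawn from a cover $\mc{I}_l$ of $[0,1]$ whose intervals have length tending to $0$ as $l \to \infty$ (the paper achieves this by replacing each constraint $\op{Mod}(\theta \in J)$, $J \in \mc{I}_n$, by $\op{Mod}(\theta \in f(J))$ for functions $f : \mc{I}_n \to \mc{I}_{n+1}$, while adding constraints for the new sentence $\sigma_{n+1}$). The fix fits inside your framework: at stage $l$, take the common refinement with the covers $\mc{V}(k, 2^{-l})$ for \emph{all} $k \leq l$ (still finitely many), and then, given $n$ and $\epsilon$, choose $l \geq n$ with $2^{-l} \leq \epsilon$. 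The rest of your proposal is sound and matches the paper's argument: the witnessing families for (2) and (3) are open covers of $\mc{W}$ precisely because of Remark~\ref{remWProperty} (for infinitary disjunctions one needs only the definition of supremum, not membership in $\mc{W}$), openness follows from continuity of the sentence-valuation maps, and properties (2) and (3) are inherited by any refinement since they are conditions preserved under passing to subsets. One small misattribution: Lemma~\ref{lemElementary} is a \emph{consequence} of the defining property of $\mc{W}$, not the reason Remark~\ref{remWProperty} holds.
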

\begin{proof}
We first define a sequence $(\mc{I}_n)_{n < \omega}$ of open covers of $[0, 1]$, the $n$th of which corresponds to splitting $[0, 1]$ into $n$ rational open intervals in $[0, 1]$ with small overlap.  To do this, for each $n < \omega$ let $\epsilon_n = \frac{1}{2^{n+2}}$.  For each $n$, define an open cover of $[0, 1]$ as follows:
\[\mc{I}_n = \set{\left[0, \frac{1}{n+2}+\epsilon_n\right), \left(\frac{1}{n+2} - \epsilon_n, \frac{2}{n+2}+\epsilon_n\right), \cdots, \left(\frac{n+1}{n+2}-\epsilon_n, 1\right]}.\]

For a sentence $\sigma$ and a rational open interval $I \subseteq [0, 1]$, we temporarily abuse notation to write
\[\op{Mod}(\sigma \in I) = \set{\mc{M} \in \mc{W} : \sigma^{\mc{M}} \in I}.\]

We construct the sequence $\tup{\mc{U}_n : n < \omega}$ recursively, so that the following properties hold:
\begin{enumerate}
\item[(a)]{
Each $\mc{U}_n$ is an open cover of $\mc{W}$,
}
\item[(b)]{
Each $U \in \mc{U}_n$ is of the form $U = \bigcap O_U$, where $O_U$ is a finite collection of open classes such that:
\begin{enumerate}
\item[(i)]{
Each element of $O_U$ is of the form $\op{Mod}(\theta \in J)$, where $\theta$ is a sentence and $J \in \mc{I}_n$,
}
\item[(ii)]{
For each $k \leq n$ there is $J_k \in \mc{I}_n$ such that $\op{Mod}(\sigma_k \in J_k) \in O_U$,
}
\item[(iii)]{
If $\op{Mod}\left(\sup_{i < \omega}\chi_i \in J\right) \in O_U$, then there exists $j < \omega$ such that $\op{Mod}(\chi_j \in J) \in O_U$,
}
\item[(iv)]{
If $\op{Mod}\left(\sup_x\phi\right) \in O_U$ then there exists $j < \omega$ such that $\op{Mod}(\phi(c_j) \in J) \in O_U$.
}
\end{enumerate}
}
\end{enumerate}
It is clear that a sequence $\tup{\mc{U}_n : n < \omega}$ satisfying (a) and (b) will satisfy (1) -- (3).

For the base case, define
\[\mc{U}_0 = \set{\op{Mod}(\sigma_0 \in I) : I \in \mc{I}_0}.\]
Since the intervals in $\mc{I}_0$ are open, $\mc{U}_0$ is an open cover, and the conditions in (b) are satisfied trivially.

Suppose that $\mc{U}_n$ is defined satisfying (a) and (b).  We first refine $\mc{U}_n$ to a cover $\widetilde{\mc{U}}_n$ as follows.  For each function $f : \mc{I}_n \to \mc{I}_{n+1}$, and each $U \in \mc{U}_n$, let $O_U^f = \set{\op{Mod}(\theta \in f(J)) : \op{Mod}(\theta \in J) \in O_U}$, and let $U^f = \bigcap O_U^f$.  Then let
\[\widetilde{\mc{U}}_n = \set{U^f : U \in \mc{U}_n, f : \mc{I}_n \to \mc{I}_{n+1}}.\]
If $\sigma_{n+1}$ is not an infinitary disjunction and is not of the form $\sup_x\phi$, then define
\[\mc{U}_{n+1} = \set{U \cap \op{Mod}(\sigma_{n+1} \in I) : U \in \widetilde{\mc{U}}_n, I \in \mc{I}_{n+1}}.\]
Note that $\mc{U}_{n+1}$ is a cover of $\mc{W}$ since $\widetilde{\mc{U}}_n$ is a cover of $\mc{W}$ and $\mc{I}_{n+1}$ is a cover of $[0, 1]$.
If $\sigma_{n+1}$ is the infinitary disjunction $\sup_{i < \omega}\chi_i$, then define
\[\mc{U}_{n+1} = \set{U \cap \op{Mod}(\sigma_{n+1} \in I) \cap \op{Mod}(\chi_j \in I) : U \in \widetilde{\mc{U}}_n, I \in \mc{I}_{n+1}, j < \omega}.\]
Finally, if $\sigma_{n+1}$ is of the form $\sup_x\phi$, define
\[\mc{U}_{n+1} = \set{U \cap \op{Mod}(\sigma_{n+1} \in I) \cap \op{Mod}(\phi(c_j) \in I) : U \in \widetilde{\mc{U}}_n, I \in \mc{I}_{n+1}, j < \omega}.\]

It is easy to see that (b) is preserved, so we only need to observe that $\mc{U}_{n+1}$ is a cover of $\mc{W}$.  This follows from Remark~\ref{remWProperty} and the fact that $\widetilde{\mc{U}}_n$ is a cover.
\end{proof}

\begin{prop}\label{propCechComplete}
The space $\mc{W}$ is \v{C}ech-complete.
\end{prop}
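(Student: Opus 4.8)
The plan is to verify directly that the sequence $\tup{\mc{U}_n : n < \omega}$ constructed in Lemma~\ref{lemCompleteSequence} is a complete sequence of open covers of $\mc{W}$ in the sense of Definition~\ref{defnCechComplete}, which immediately witnesses \v{C}ech-completeness. So let $\mc{F}$ be a centred family of closed subsets of $\mc{W}$ such that for each $n$ there are $F_n \in \mc{F}$ and $U_n \in \mc{U}_n$ with $F_n \subseteq U_n$; I must produce a structure in $\bigcap \mc{F}$. The key device is that each $U_n$ has the form $\bigcap O_{U_n}$, where $O_{U_n}$ is a finite collection of classes $\op{Mod}(\theta \in J)$ with $J \in \mc{I}_n$, and by property (b)(ii) every sentence $\sigma_k$ with $k \leq n$ is pinned down by some $\op{Mod}(\sigma_k \in J_k) \in O_{U_n}$.

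First I would use the centredness of $\mc{F}$ together with the $F_n \subseteq U_n$ condition to assemble a single consistent description of a structure. Because $\mc{I}_n$ refines as $n$ grows (via the functions $f : \mc{I}_n \to \mc{I}_{n+1}$ built into $\widetilde{\mc{U}}_n$), and the interval lengths in $\mc{I}_n$ shrink to $0$, the constraints $\sigma_k \in J$ coming from the various $U_n$ determine, for each fixed sentence $\sigma_k$, a nested sequence of rational open intervals whose lengths tend to $0$. I would check that for any finitely many indices $n_1 < \dots < n_m$ the intervals assigned to a common sentence are compatible: if they were not, then $F_{n_1} \cap \dots \cap F_{n_m}$ would be empty, contradicting centredness, since each $F_{n_i} \subseteq U_{n_i}$ forces the value of that sentence into the corresponding interval. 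Thus for each sentence $\sigma_k$ the nested intervals have a single common point, which I define to be the intended truth value $v(\sigma_k) \in [0, 1]$.

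Next I would build an actual structure realizing these truth values. The natural candidate is a Henkin-style construction: the assignment $v$ yields a finitely satisfiable, hence consistent, set of value-constraints on all $L_C$-sentences, and conditions (b)(iii) and (b)(iv) guarantee the Henkin witnessing properties for infinitary disjunctions and for $\sup_x$ quantifiers—precisely the clauses packaged into Remark~\ref{remWProperty} and used to define $\mc{W}$. Concretely, I would let the universe be the term algebra on $C$ modulo the pseudometric read off from the values $v(d(t_1, t_2))$, interpret function and predicate symbols by the corresponding values, and verify by induction on formula complexity that the resulting structure $\tup{\mc{M}, \overline{a}}$ satisfies $\sigma_k^{\tup{\mc{M}, \overline{a}}} = v(\sigma_k)$ for every $k$. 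The witnessing conditions are exactly what make the quantifier and disjunction cases of this induction go through, and they also place $\tup{\mc{M}, \overline{a}}$ inside $\mc{W}$.

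Finally I would confirm that $\tup{\mc{M}, \overline{a}} \in \bigcap \mc{F}$. Each $F \in \mc{F}$ is closed, hence of the form $\op{Mod}(T)$ for some $L_C$-theory $T$; since $\mc{F}$ is centred and each $U_n$ constrains the $\sigma_k$ for $k \le n$ into the intervals defining $v$, every sentence in $T$ receives from $v$ a value forcing satisfaction, so $\tup{\mc{M}, \overline{a}} \models T$ and thus lies in $F$. The main obstacle I anticipate is the bookkeeping in the compatibility argument: I must extract, from the finitely-many-at-a-time information supplied by centredness and the refinement maps $f$, a globally coherent real value for every sentence, and then show the Henkin construction respects it. The inductive verification that the built structure's truth values agree with $v$—particularly reconciling the approximate semantics of $\sup_{i}\chi_i$ and $\sup_x \phi$ with the single witness supplied by conditions (iii) and (iv)—is where the argument requires the most care.
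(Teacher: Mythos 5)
Your proposal is correct in its essentials, but it takes a genuinely different route from the paper. The paper also starts from the covers of Lemma~\ref{lemCompleteSequence}, but instead of synthesizing a truth-value assignment it uses centredness to choose structures $\mc{M}_n \in F_0 \cap \cdots \cap F_n$, fixes a nonprincipal ultrafilter $\mc{D}$ on $\omega$, and proves a \L o\'s-type claim: $\sigma^{\prod_{\mc{D}}\mc{M}_n} = \lim_{n \to \mc{D}}\sigma^{\mc{M}_n}$ for every $L_C$-sentence $\sigma$; properties (1)--(3) of the covers enter exactly where they enter your argument, namely to rescue the two infinitary induction steps, where \L o\'s's theorem is false in general, and the ultraproduct is then the desired point of $\mc{W} \cap \bigcap \mc{F}$. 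Your Henkin/model-existence construction buys two things: the term model on $C$ is countable outright, hence literally a member of $\op{Str}(L_C)$ (the paper's ultraproduct can have cardinality $2^{\aleph_0}$, so strictly speaking one must pass to a countable $L_C$-elementary substructure, available by Proposition~\ref{propDownLS}, before it is a point of $\mc{W}$), and it avoids ultrafilters altogether. The ultraproduct buys economy: the structure exists for free, so the only work is the induction on sentences, with no need to verify that the assignment $v$ respects the connectives, the triangle inequality, the moduli of continuity, well-definedness of the quotient by the pseudometric, and the substitution lemma --- all of which your route must check (they do all go through, by the finite-approximation argument implicit in your write-up: every $\mc{N} \in F_n$ has $\sigma_k^{\mc{N}}$ within the length of the corresponding interval of $\mc{I}_n$ of $v(\sigma_k)$, for all $k \leq n$). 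Two local corrections to your argument. First, the intervals constraining a fixed sentence are \emph{not} nested: $U_{n+1}$ is not built from $U_n$, and the maps $f : \mc{I}_n \to \mc{I}_{n+1}$ need not send an interval into a subinterval; what you actually have --- and correctly fall back on --- is that the closures of these intervals have the finite intersection property by centredness and have lengths tending to $0$, so they share a unique point, which defines $v(\sigma_k)$. Second, avoid ``finitely satisfiable, hence consistent'': compactness fails for $\IL$, and your proof neither has it nor needs it --- the witnessing conditions (b)(iii) and (b)(iv) are precisely the substitute, just as they substitute for the failure of \L o\'s's theorem in the paper's infinitary cases.
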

\begin{proof}
Let $\tup{\mc{U}_n : n < \omega}$ be a sequence of open covers as given by Lemma~\ref{lemCompleteSequence}.  Let $\mc{F}$ be a centred family of closed sets such that for each $n < \omega$ there is $F_n \in \mc{F}$ and $U_n \in \mc{U}_n$ such that $F_n \subseteq U_n$.  To show that $\tup{\mc{U}_n : n < \omega}$ is a complete sequence of open covers, we must show that $\bigcap \mc{F} \neq \emptyset$.  It is easy to check, using (1) from Lemma~\ref{lemCompleteSequence}, that $\bigcap \mc{F} = \bigcap_{n < \omega}F_n$.

For each $n < \omega$, choose $\mc{M}_n \in F_0 \cap \cdots \cap F_n$.  Let $\mc{D}$ be a non-principal ultrafilter on $\omega$.  We will show that $\prod_{\mc{D}}\mc{M}_n \in \mc{W} \cap \bigcap_{n < \omega}F_n$.

\begin{claim}\label{Claim3}
For any $L_C$-sentence $\sigma$, $\sigma^{\prod_{\mc{D}}\mc{M}_n} = \lim_{n \to \mc{D}}\sigma^{\mc{M}_n}$.
\end{claim}
\begin{proof}[Proof of Claim~\ref{Claim3}]  
The proof is by induction on the complexity of $\sigma$.  The case where $\sigma$ is an atomic sentence follows directly from the definition of the ultraproduct (Definition~\ref{defnUltraproduct}), and the case where $\sigma$ is the result of applying a finitary connective follows from the continuity of the finitary connectives and the definition of ultrafilter limits, so we only need to deal with the infinitary disjunction and $\sup_x\phi$ cases.  
\newline\newline\noindent
$\underline{\sigma = \sup_{i < \omega}\chi_i}$:\\
It is sufficient to show that for each $a \in \mathbb{Q} \cap (0, 1)$, $a < \sigma^{\prod_{\mc{D}}\mc{M}_n}$ if and only if $\set{n < \omega : \sigma^{\mc{M}_n} > a} \in \mc{D}$.

Suppose $a < \sigma^{\prod_{\mc{D}}\mc{M}_n}$.  Then 
\[\sup_{i < \omega}\chi_{i}^{\prod_{\mc{D}}\mc{M}_n} > a.\]  
Hence there is some $j < \omega$ such that
\[\chi_{j}^{\prod_{\mc{D}}\mc{M}_n} > a.\]
So by the inductive hypothesis, $\lim_{n\to\mc{D}}\chi_{j}^{\mc{M}_n} > a$.  That is,
\[\set{n < \omega : \chi_{j}^{\mc{M}_n} > a} \in \mc{D}.\]
We have $\chi_{j}^{\mc{M}_n} \leq \sigma^{\mc{M}_n}$ for each $n$, so $\set{n < \omega : \sigma^{\mc{M}_n} > a} \supseteq \set{n < \omega : \chi_{j}^{\mc{M}_n} > a}$, and hence
\[\set{n < \omega : \sigma^{\mc{M}_n} > a} \in \mc{D}.\]

Now assume $\set{n < \omega : \sigma^{\mc{M}_n} > a} \in \mc{D}$.  Note that, by the inductive hypothesis, it suffices to find $j < \omega$ such that $\set{n < \omega : \chi_{j}^{\mc{M}_n} > a} \in \mc{D}$.  Find $l < \omega$ such that $\sigma = \sigma_l$.  Find $k \geq l$ such that $\sigma^{\mc{M}_k} > a$ and for all $\mc{N} \in U_k$, $\sigma^{\mc{N}} > a$ (by (1) of Lemma~\ref{lemCompleteSequence}).  By (2) of Lemma~\ref{lemCompleteSequence}, there is some $j < \omega$ such that for all $\mc{N} \in U_k$, $\chi_{j}^{\mc{N}} > a$.  In particular, for all $n \geq k$, $\chi_{j}^{\mc{M}_n} > a$.  Thus for cofinitely many $n$ we have $\chi_{j}^{\mc{M}_n} > a$, and $j$ is as desired.
\newline\newline\noindent
$\underline{\sigma = \sup_x\phi(x)}$.\\
Suppose that $\set{n < \omega : \left(\sup_x\phi\right)^{\mc{M}_n} > a} \in \mc{D}$.  As in the previous case, by (1) of Lemma~\ref{lemCompleteSequence} we can find $k < \omega$ such that $(\sup_x\phi)^{\mc{N}} > a$ for all $\mc{N} \in U_k$.  By (3) of Lemma~\ref{lemCompleteSequence} we get $j < \omega$ such that $\phi(c_j)^{\mc{N}} > a$ for all $\mc{N} \in U_k$.  For all $n \geq k$ we have $\phi(c_j)^{\mc{M}_n} > a$, and hence $\lim_{n \to \mc{D}}\phi(c_j)^{\mc{M}_n} > a$.  By the inductive hypothesis we have $\phi(c_j)^{\prod_{\mc{D}}\mc{M}_n} > a$, and therefore $\left(\sup_x\phi\right)^{\prod_{\mc{D}}\mc{M}_n} > a$ as well.

Now suppose that $\set{n < \omega : \left(\sup_x\phi\right)^{\mc{M}_n} > a} \not\in \mc{D}$.  In order to prove that $\left(\sup_x\phi\right)^{\prod_{\mc{D}}\mc{M}_n} \leq a$, we consider two cases.  The case $\set{n < \omega : \left(\sup_x\phi\right)^{\mc{M}_n} < a} \in \mc{D}$ is handled in the same way as the previous paragraph.  For the other case, suppose that $\set{n < \omega : \left(\sup_x\phi\right)^{\mc{M}_n} = a} \in \mc{D}$.  Then for each $\epsilon \in \mathbb{Q} \cap (0, 1)$ such that $\epsilon < \min\set{a, 1-a}$, we also have 
\[\set{n < \omega : \left(\sup_x\phi\right)^{\mc{M}_n} \in (a-\epsilon, a+\epsilon)} \in \mc{D}.\] 
As in the preceding cases, this implies that $\left(\sup_x\phi\right)^{\prod_{\mc{D}}\mc{M}_n} \in (a-\epsilon, a+\epsilon)$ for each such $\epsilon$.  Taking $\epsilon \to 0$ we obtain $\left(\sup_x\phi\right)^{\prod_{\mc{D}}\mc{M}_n} = a$.

\renewcommand{\qedsymbol}{$\dashv$ -- Claim~\ref{Claim3}}
\end{proof}

For each $F \in \mc{F}$, let $T_F$ be a theory such that $F = \op{Mod}(T_F)$.  Then Claim~\ref{Claim3} implies that $\prod_{\mc{D}}\mc{M}_n \models T_{F_m}$ for every $m < \omega$, so it only remains to check that $\prod_{\mc{D}}\mc{M}_n \in \mc{W}$.  The proof is essentially the same as the last case of the claim.  Suppose that $\phi(x)$ is an $L_C$-formula in one free variable, and that ${\left(\sup_x\phi\right)^{\prod_{\mc{D}}\mc{M}_n}=1}$.  Fix $r \in \mathbb{Q} \cap (0, 1)$.  We need to find $j$ such that $\phi(c_j)^{\prod_{\mc{D}}\mc{M}_n} > r$.  By Claim~\ref{Claim3} we have $\lim_{n \to \mc{D}}\left(\sup_x\phi\right)^{\mc{M}_n} = 1$, so 
\[\set{n < \omega : \left(\sup_x\phi\right)^{\mc{M}_n} > r} \in \mc{D}.\]
Using (1) and (3) of Lemma~\ref{lemCompleteSequence} we can find $k$ and $j$ such that $\phi(c_j)^{\mc{N}} > r$ for all $\mc{N} \in U_k$.  Hence $\set{n < \omega : \phi(c_j)^{\mc{M}_n} > r} \in \mc{D}$, and by Claim~\ref{Claim3} we have $\phi(c_j)^{\prod_{\mc{D}}\mc{M}_n} > r$.
\end{proof}

\begin{cor}\label{lemWBaire}
Let $T$ be a consistent $L$-theory.  Then $\mc{W} \cap \op{Mod}_{L_C}(T)$ is non-empty and is Baire.
\end{cor}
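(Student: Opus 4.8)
The plan is to split the statement into its two assertions and to dispatch each using results already in hand. For non-emptiness, I would proceed as follows. Since $T$ is a consistent $L$-theory it has a model; applying the Downward L\"owenheim--Skolem Theorem (Proposition~\ref{propDownLS}) produces a countable $L$-structure $\mc{N}$ with $\mc{N} \models T$. Enumerating the universe of $\mc{N}$ as $\overline{a} = \set{a_i : i < \omega}$ and interpreting each $c_i$ as $a_i$ yields an $L_C$-structure $\tup{\mc{N}, \overline{a}}$. As observed in the discussion immediately preceding this corollary, interpreting $C$ as an enumeration of a countable $L$-structure places $\tup{\mc{N}, \overline{a}}$ in $\mc{W}$. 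Because $T$ is an $L$-theory, its sentences do not mention the constants from $C$, so the truth value of each sentence of $T$ is unchanged under the expansion; hence $\tup{\mc{N}, \overline{a}} \models T$, i.e. $\tup{\mc{N}, \overline{a}} \in \op{Mod}_{L_C}(T)$. Therefore $\mc{W} \cap \op{Mod}_{L_C}(T)$ contains $\tup{\mc{N}, \overline{a}}$ and is non-empty.

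For the Baire assertion, the key point is that $\mc{W} \cap \op{Mod}_{L_C}(T)$ is a closed subspace of the \v{C}ech-complete space $\mc{W}$. By the definition of the logic topology, $\op{Mod}_{L_C}(T)$ is a closed subclass of $\op{Str}(L_C)$, so its intersection with $\mc{W}$ is closed in the subspace $\mc{W}$. Proposition~\ref{propCechComplete} gives that $\mc{W}$ is \v{C}ech-complete, and Lemma~\ref{lemCompleteProperties}(2) then shows that the closed subspace $\mc{W} \cap \op{Mod}_{L_C}(T)$ is itself \v{C}ech-complete. Finally, Lemma~\ref{lemCompleteProperties}(1) yields that every \v{C}ech-complete space is Baire, so $\mc{W} \cap \op{Mod}_{L_C}(T)$ is Baire, as required.

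Since both parts reduce directly to results already established, I do not expect any serious obstacle. The only point requiring care is verifying that the expanded structure $\tup{\mc{N}, \overline{a}}$ genuinely lies in $\mc{W}$ and still satisfies $T$; this follows from the membership criterion defining $\mc{W}$ together with the fact that $L$-sentences are insensitive to the interpretation of the new constants in $C$.
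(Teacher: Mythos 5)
Your proposal is correct and follows essentially the same route as the paper: non-emptiness via Downward L\"owenheim--Skolem plus interpreting $C$ as an enumeration of a countable model, and the Baire property via Proposition~\ref{propCechComplete} together with both parts of Lemma~\ref{lemCompleteProperties} applied to the closed subspace $\mc{W} \cap \op{Mod}_{L_C}(T)$. The only difference is that you spell out the steps (closedness in the logic topology, heredity of \v{C}ech-completeness, insensitivity of $L$-sentences to the new constants) that the paper leaves implicit.
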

\begin{proof}
Since $T$ is consistent it has a countable model $\mc{M}$, by Downward L\"owenheim-Skolem (Proposition~\ref{propDownLS}).  If $\overline{a}$ is an enumeration of $\mc{M}$, then $\tup{\mc{M}, \overline{a}} \in \mc{W} \cap \op{Mod}_{L_C}(T) \neq \emptyset$.  The fact that $\mc{W} \cap \op{Mod}_{L_C}(T)$ is Baire follows immediately from Lemma~\ref{lemCompleteProperties} and Proposition~\ref{propCechComplete}.
\end{proof}

\subsection{Proof of Omitting Types}\label{subsectionProof}
In this section we connect the model-theoretic notions in the statement of Theorem~\ref{thmOmittingTypesStatement} to the topology of the space $\mc{W}$.  The connection between Baire spaces and the Omitting Types Theorem in classical logic is well-known.  We give a proof in our $[0, 1]$-valued setting for completeness, following the arguments in \cite{Caicedo2012} closely.  For simplicity we present the details of the proof in the case where the signature $S$ contains no function symbols.  After the proof is complete we will describe the modifications necessary for the general case.  

Recall that a set of $L$-formulas $\Sigma(x_1, \ldots, x_n)$ is a \defined{type of $T$} if there is $\mc{M} \models T$ and $a_1, \ldots, a_n \in \mc{M}$ such that $\mc{M} \models \phi(a_1, \ldots, a_n)$ for all $\phi(x_1, \ldots, x_n) \in \Sigma(x_1, \ldots, x_n)$.  When $S$ has no function symbols, the definition of a type of $T$ being principal takes the following simplified form:

\begin{defn}\label{defnPrincipal1}
Let $T$ be an $L$-theory in a signature without function symbols.  A type $\Sigma(\overline{x})$ of $T$ \defined{principal} over $T$ is there is an $L$-formula $\phi(\overline{x})$ such that $T \cup \phi(\overline{x})$ is satisfiable, and for some $r \in \mathbb{Q} \cap (0, 1)$ we have $T \cup \set{\phi(\overline{x}) \geq r} \models \Sigma(\overline{x})$.  We say that such $\phi$ and $r$ \defined{witness} the principality of $\Sigma$.
\end{defn}

We can now give the connection between principality of types and Baire spaces.

\begin{lem}\label{lemPrincipalInterior}
Let $\Sigma(\overline{x})$ be a type of an $L$-theory $T$, and let $\overline{c}$ be new constant symbols.  Then $\Sigma(\overline{x})$ is principal if and only if $\op{Mod}_{L_{\overline{c}}}(T \cup \Sigma(\overline{c}))$ has nonempty interior in $\op{Mod}_{L_{\overline{c}}}(T)$.
\end{lem}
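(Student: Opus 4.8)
The plan is to prove both implications by translating between the syntactic witnesses of Definition~\ref{defnPrincipal1} and basic open neighbourhoods in the logic topology. Throughout I write $X = \op{Mod}_{L_{\overline c}}(T)$ for the ambient subspace and $Z = \op{Mod}_{L_{\overline c}}(T \cup \Sigma(\overline c))$, where $\overline c = (c_1, \dots, c_n)$ matches the free variables $\overline x$ of $\Sigma$. Since each condition $\psi(\overline c) = 1$ (for $\psi \in \Sigma$) defines a closed class, $Z$ is closed in $X$, so the content of the lemma is entirely about when this closed set has nonempty interior. The key topological input is that, because the subbasic closed classes are the $\op{Mod}(\theta \ge r)$ and $\op{Mod}(\theta \le r)$ for $L_{\overline c}$-sentences $\theta$ and rationals $r$, a neighbourhood basis at a point of $X$ is given by finite intersections $\bigcap_{i=1}^k \{\mathcal N : \theta_i^{\mathcal N} \bowtie_i r_i\}$ with $\bowtie_i \in \{<, >\}$; moreover every $L_{\overline c}$-sentence has the form $\chi(\overline c)$ for some $L$-formula $\chi(\overline x)$.

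For the forward direction, suppose $\phi(\overline x)$ and $r$ witness principality. Then $\{\mathcal N \in X : \phi(\overline c)^{\mathcal N} > r\}$ is open in $X$; it is nonempty because $T \cup \phi$ is satisfiable and so, by Downward L\"owenheim--Skolem (Proposition~\ref{propDownLS}), realised in a countable model with $\phi(\overline c) = 1 > r$; and it is contained in $Z$ because $\phi(\overline c)^{\mathcal N} > r$ implies $\phi(\overline c)^{\mathcal N} \ge r$, whence $T \cup \{\phi(\overline c) \ge r\} \models \Sigma(\overline c)$ forces $\mathcal N \in Z$. Thus $Z$ has nonempty interior.

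For the converse, I would start from a nonempty open $U \subseteq X$ with $U \subseteq Z$ and a point $\mathcal M_0 \in U$, and fix a basic neighbourhood $\bigcap_{i=1}^k \{\theta_i \bowtie_i r_i\} \subseteq U$ containing $\mathcal M_0$, writing $\theta_i = \chi_i(\overline c)$. The goal is to compress these finitely many strict inequalities into a single $L$-formula $\phi(\overline x)$ together with a threshold. For each $i$ I pick a rational $\tilde r_i$ strictly between $r_i$ and $\chi_i(\overline c)^{\mathcal M_0}$ (on the appropriate side) and an integer $m_i$ large enough that $m_i$ copies of the truncated difference saturate at $\mathcal M_0$; then I set $g_i(\chi_i)$ to be the $m_i$-fold truncated sum $\bigl(\neg(\chi_i \to \tilde r_i)\bigr) \dotplus \cdots \dotplus \bigl(\neg(\chi_i \to \tilde r_i)\bigr)$ for a $>$-condition (and symmetrically, using $\neg(\tilde r_i \to \chi_i)$, for a $<$-condition), and finally $\phi = \bigwedge_{i=1}^k g_i(\chi_i)$. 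By design $\phi(\overline c)^{\mathcal M_0} = 1$, so $T \cup \phi$ is satisfiable, while $\phi(\overline c)^{\mathcal N} > 0$ forces every $\chi_i$ to lie strictly on the correct side of $r_i$, i.e. $\mathcal N \in U \subseteq Z$; taking $r = \tfrac12$ then yields $T \cup \{\phi(\overline c) \ge r\} \models \Sigma(\overline c)$ for countable $\mathcal N$, and for arbitrary models via Downward L\"owenheim--Skolem.

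The main obstacle is precisely this last construction: the truth-value inequalities are strict and the ``indicator'' I want is discontinuous, so it cannot be realised exactly. The device is to leave a margin (the gap between $r_i$ and $\tilde r_i$, and between $\tilde r_i$ and $\chi_i(\overline c)^{\mathcal M_0}$) and to use ramps that vanish off the correct side and saturate to $1$ at $\mathcal M_0$. A secondary but essential point is that $\phi$ must lie in the fragment $L$: since a fragment is only closed under the finitary connectives $\to$ and $\neg$ (not under infinitary disjunction), I cannot freely use the scalar $\tfrac12 x$ obtained as a limit in the earlier remark, so I choose the breakpoints $\tilde r_i$ rational with each ramp having \emph{integer} slope $m_i$, which makes every $g_i(\chi_i)$ a finite \L ukasiewicz combination and hence an $L$-formula.
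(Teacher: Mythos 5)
Your proof is correct, and your forward direction is the same as the paper's. The converse is where you diverge. The paper exploits a simpler description of the topology: since every closed class is $\op{Mod}(T') = \bigcap_{\sigma \in T'}\op{Mod}(\sigma)$, the open classes have a \emph{basis} of single conditions $\set{\mc{N} : \phi(\overline{c})^{\mc{N}} > 0}$ (complements of single-sentence classes), so no finite intersections and no two-sided rational cutoffs ever need to be handled. From $\emptyset \neq \op{Mod}_{L_{\overline{c}}}(T) \cap \op{Mod}_{L_{\overline{c}}}(\phi(\overline{c}) > 0) \subseteq \op{Mod}_{L_{\overline{c}}}(T \cup \Sigma(\overline{c}))$ it extracts a rational $s$ with $T \cup \set{\phi(\overline{x}) \geq s}$ satisfiable, and then checks that the formula $s \to \phi$ together with the rational $1-r$ (for any $r \in \mathbb{Q} \cap (0,s)$) witnesses principality: $s \to \phi$ equals $1$ exactly where $\phi \geq s$ (giving satisfiability for free), and $(s \to \phi) \geq 1-r$ forces $\phi \geq s-r > 0$, hence $\Sigma$. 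Your \L ukasiewicz ramps $\min\set{m_i\max\set{\chi_i - \tilde{r}_i, 0}, 1}$ accomplish the same compression for the finer neighbourhood basis of finite intersections of strict inequalities; the extra machinery (integer slopes, saturation at $\mc{M}_0$) is genuinely needed in your setup, because Definition~\ref{defnPrincipal1} requires the witnessing formula to attain the value $1$ in some model of $T$, which the paper's implication-shift gets automatically while your vanish-off-the-good-set formula only gets through truncation. What your route buys is explicitness: it does not require noticing the minimal single-sentence basis, and it spells out both why the witness stays inside the fragment $L$ and why a rational margin between $r_i$ and $\tilde{r}_i$ must be left; what the paper's route buys is brevity. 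Both arguments (yours explicitly, the paper's implicitly) also need Downward L\"owenheim--Skolem twice, once for nonemptiness of the open class and once to pass from countable structures in $\op{Str}(L_{\overline{c}})$ to arbitrary models in the entailment $T \cup \set{\phi(\overline{x}) \geq r} \models \Sigma(\overline{x})$.
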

\begin{proof}
Assume that $\Sigma(\overline{x})$ is principal, and let $\phi(\overline{x}) \in L$ and $r \in \mathbb{Q} \cap (0, 1)$ witness the principality of $\Sigma$.  Then $T \cup \set{\phi(\overline{x})}$ is satisfiable, and hence $\op{Mod}_{L_{\overline{c}}}(T \cup \phi(\overline{c})) \neq \emptyset$.  If $r' \in \mathbb{Q} \cap (r, 1)$, then $\op{Mod}_{L_{\overline{c}}}(T) \cap \op{Mod}_{L_{\overline{c}}}(\phi(\overline{c}) > r')$ is a nonempty open subclass of $\op{Mod}_{L_{\overline{c}}}(T \cup \Sigma(\overline{c}))$.

Conversely, suppose that $\op{Mod}_{L_{\overline{c}}}(T \cup \Sigma(\overline{c}))$ has nonempty interior in $\op{Mod}_{L_{\overline{c}}}(T)$, so it contains a basic open class.  That is, there is an $L_{\overline{c}}$-sentence $\phi(\overline{c})$ such that 
\[\emptyset \neq \op{Mod}_{L_{\overline{c}}}(T) \cap \op{Mod}_{L_{\overline{c}}}(\phi(\overline{c}) > 0) \subseteq \op{Mod}_{L_{\overline{c}}}(T \cup \Sigma(\overline{c})).\]
It follows that there exists $s \in \mathbb{Q} \cap (0, 1)$ such that $T \cup \set{\phi(\overline{x}) \geq s}$ is satisfiable.  Our choices of $\phi$ and $s$ give us that
\[T \cup \set{\phi(\overline{x}) \geq s} \models T \cup \set{\phi(\overline{x}) > 0} \models \Sigma(\overline{x}).\]
It is easy to check that if $r \in \mathbb{Q} \cap (0, s)$ then the formula $s \to \phi$ and the rational $1 - r$ witness that $\Sigma$ is principal.
\end{proof}

\begin{lem}\label{lemRTi}
Let $T$ be an $L$-theory.  For any $\mb{i} = \tup{i_1, i_2, \ldots, i_n} \in \omega^{<\omega}$, let $R_{T, \mb{i}} : \mc{W} \cap \op{Mod}_{L_C}(T) \to \op{Mod}_{L_{\set{c_{i_1}, \ldots, c_{i_n}}}}(T)$ be the natural projection defined by
\[\tup{\mc{M}, \overline{a}} \mapsto \tup{\mc{M}, a_{i_1}, \ldots, a_{i_n}}.\]
Then $R_{T, \mb{i}}$ is continuous, open, and surjective.
\end{lem}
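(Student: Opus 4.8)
The plan is to establish the three properties separately, with openness being the one requiring real work; continuity and surjectivity are direct.

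\emph{Continuity and surjectivity.} Write $D = \set{c_{i_1}, \ldots, c_{i_n}}$. The topology on $\op{Mod}_{L_D}(T)$ is generated by the subbasic open classes $\set{\tup{\mc{N}, \overline{b}} : \sigma^{\tup{\mc{N},\overline{b}}} \in I}$ for $L_D$-sentences $\sigma$ and rational open intervals $I$. Every such $\sigma$ has the form $\psi(c_{i_1}, \ldots, c_{i_n})$ with $\psi \in L$, which is at the same time an $L_C$-sentence; its preimage under $R_{T,\mb{i}}$ is $\set{\tup{\mc{M}, \overline{a}} : \psi(c_{i_1}, \ldots, c_{i_n})^{\tup{\mc{M},\overline{a}}} \in I}$, an open subclass of $\mc{W} \cap \op{Mod}_{L_C}(T)$. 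Hence $R_{T,\mb{i}}$ is continuous. For surjectivity, given $\tup{\mc{N}, b_1, \ldots, b_n} \in \op{Mod}_{L_D}(T)$ we have that $\mc{N} \models T$ is a countable $L$-structure; choose an enumeration $\overline{a} = (a_j)_{j<\omega}$ of all of $\mc{N}$ with $a_{i_k} = b_k$ for each $k \leq n$ (possible since $\mc{N}$ is countable and the $i_k$ name finitely many positions). As observed before the lemma, a full enumeration of $\mc{N}$ gives $\tup{\mc{N}, \overline{a}} \in \mc{W}$, and $\tup{\mc{N}, \overline{a}} \models T$, so $\tup{\mc{N}, \overline{a}} \in \mc{W} \cap \op{Mod}_{L_C}(T)$ maps onto $\tup{\mc{N}, b_1, \ldots, b_n}$.

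\emph{Openness.} A basic open subclass $V$ of $\mc{W} \cap \op{Mod}_{L_C}(T)$ is a finite intersection of conditions ``$\theta_k(c_{i_1}, \ldots, c_{i_n}, c_{l_1}, \ldots, c_{l_p})^{\tup{\mc{M},\overline{a}}} \in I_k$'' for $1 \leq k \leq K$, where $\theta_k \in L$, the $I_k$ are rational open intervals, and $c_{l_1}, \ldots, c_{l_p}$ are the constants from $C \setminus D$ occurring in the defining sentences (after padding so that all $\theta_k$ use the same constants). Fix a point $\tup{\mc{M}, \overline{b}}$ of $R_{T,\mb{i}}(V)$, witnessed by some $\tup{\mc{M}, \overline{a}} \in V$ with $a_{l_m}$ interpreting $c_{l_m}$. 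Choose rational intervals $I_k' = [s_k, t_k] \subseteq I_k$ with the value of $\theta_k$ at the witness in the interior of $I_k'$, and set $\rho_k = (s_k \LI \theta_k) \wedge (\theta_k \LI t_k)$, so that, since $\wedge$ computes the minimum and $\alpha \LI \beta$ evaluates to $\min(1-\alpha+\beta, 1)$, we have $\rho_k = 1$ exactly when $\theta_k \in [s_k, t_k]$. As $L$ is closed under $\LI$, $\wedge$, and $\sup_y$, the sentence
\[\Psi := \sup_{y_1} \cdots \sup_{y_p} \bigwedge_{k=1}^{K} \rho_k(c_{i_1}, \ldots, c_{i_n}, y_1, \ldots, y_p)\]
is an $L_D$-sentence, and $\Psi^{\tup{\mc{M}, \overline{b}}} = 1$ with the $a_{l_m}$ as witnesses.

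Now choose $r \in \mathbb{Q} \cap (0,1)$ close enough to $1$ that $\rho_k > r$ confines $\theta_k$ to $(s_k - (1-r),\, t_k + (1-r)) \subseteq I_k$ for every $k$ (possible since each $I_k'$ sits strictly inside $I_k$). Then $\tup{\mc{M}, \overline{b}}$ lies in the open class $\set{\Psi > r} \cap \op{Mod}_{L_D}(T)$, and I claim this class is contained in $R_{T,\mb{i}}(V)$. Indeed, if $\tup{\mc{N}', \overline{b}'} \models T$ and $\Psi^{\tup{\mc{N}', \overline{b}'}} > r$, then because a strict inequality $\sup_{\overline{y}}(\cdots) > r$ is always witnessed by genuine elements, there are $d_1, \ldots, d_p \in \mc{N}'$ with $\rho_k(\overline{b}', \overline{d}) > r$, hence $\theta_k(\overline{b}', \overline{d}) \in I_k$, for all $k$. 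Extending $\overline{b}', \overline{d}$ to a full enumeration $\overline{a}'$ of $\mc{N}'$ with $a'_{i_k} = b'_k$ and $a'_{l_m} = d_m$ (as in the surjectivity argument) yields $\tup{\mc{N}', \overline{a}'} \in \mc{W} \cap \op{Mod}_{L_C}(T) \cap V$ with $R_{T,\mb{i}}(\tup{\mc{N}', \overline{a}'}) = \tup{\mc{N}', \overline{b}'}$. Thus $\set{\Psi > r} \cap \op{Mod}_{L_D}(T)$ is an open neighborhood of $\tup{\mc{M}, \overline{b}}$ inside $R_{T,\mb{i}}(V)$, so $R_{T,\mb{i}}(V)$ is open.

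The main obstacle is openness, and its crux is the passage from ``the projected structure satisfies the existential sentence $\Psi$'' back to ``there is a point of $V \cap \mc{W}$ lying over it.'' This passage relies on two facts: a strict inequality for a $\sup_{\overline{y}}$ is realized by actual tuples of elements (so no completeness or $\mc{W}$-membership of the \emph{target} is needed), and any countable structure equipped with finitely many designated elements extends to a full enumeration lying in $\mc{W}$. Encoding the interval membership conditions by the formulas $\rho_k$ via the \L ukasiewicz implication is the device that turns the ``nonempty interior'' of a projection into an honest open condition expressible in $L_D$.
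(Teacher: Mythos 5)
Your proposal is correct and takes essentially the same approach as the paper: openness is obtained by trading the forgotten constants for quantifiers, using that a strict inequality on a quantified sentence is witnessed by actual elements, and lifting the witnesses back into $\mc{W}$ by extending them to an enumeration of the countable structure. The differences are only presentational --- the paper computes the image of a basic open class exactly, as the class where the sentence $\inf_{x_1}\cdots\inf_{x_m}\phi(c_0,x_1,\ldots,x_m)$ has value $<1$, while you give the dual, pointwise neighborhood argument with $\sup$-sentences and \L ukasiewicz-encoded interval thresholds, and you bypass the paper's appeal to Downward L\"owenheim--Skolem by noting that the structures involved are countable by definition of $\op{Str}$.
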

\begin{proof}
To keep the notation as simple as possible, we will give the proof only in the case where $\mb{i} = \tup{0}$ -- the general case is similar.  To see that $R_{T, \mb{i}}$ is continuous, observe that if $\sigma$ is any $L_{c_0}$-sentence then $\sigma$ is also an $L_C$-sentence, and the pre-image of the basic closed class $\op{Mod}_{L_{c_0}}(\sigma)$ under $R_{T, \mb{i}}$ is the closed class $\op{Mod}_{L_C}(\sigma)$.

Now suppose that $\phi(c_0, \ldots, c_m)$ is an $L_C$-sentence (with possibly some of the $c_i$'s, including $c_0$, not actually appearing).  Define the $L_{c_0}$-sentence $\theta(c_0)$ by
\[\inf_{x_1} \cdots \inf_{x_m} \phi(c_0, x_1, \ldots, x_m).\]
To finish the proof it suffices to show that $R_{T, \mb{i}}$ maps $\left(\mc{W} \cap \op{Mod}_{L_C}(T)\right)\setminus\op{Mod}_{L_C}(\phi(c_0, \ldots, c_m))$ onto $\op{Mod}_{L_{c_0}}(T) \setminus \op{Mod}_{L_{c_0}}(\theta(c_0))$.

Suppose that $\tup{\mc{M}, \overline{a}} \in \left(\mc{W} \cap \op{Mod}_{L_C}(T)\right)\setminus\op{Mod}_{L_C}(\phi(c_0, \ldots, c_m))$.  Then \\$\tup{\mc{M}, \overline{a}}~\not\models~\phi(c_0, \ldots, c_m)$, so there is $r \in \mathbb{Q} \cap (0, 1)$ such that 
\[\tup{\mc{M}, \overline{a}} \models \phi(c_0, \ldots, c_m) \leq r.\]
Then clearly
\[\tup{\mc{M}, a_0} \models \theta(c_0) \leq r.\]
It follows that $\tup{\mc{M}, a_0} \in \op{Mod}_{L_{c_0}}(T) \setminus \op{Mod}_{L_{c_0}}(\theta(c_0))$.

Now suppose that $\tup{\mc{M}, a_0} \in \op{Mod}_{L_{c_0}}(T) \setminus \op{Mod}_{L_{c_0}}(\theta(c_0))$.  As above, find $r \in \mathbb{Q} \cap (0, 1)$ such that $\tup{\mc{M}, a_0}~\models~\theta(c_0) \leq r$, and pick $r' \in (r, 1)$.  Then by definition of $\theta$ there are elements $a_1, \ldots, a_m \in \mc{M}$ such that
\[\tup{\mc{M}, a_0, a_1, \ldots, a_m} \models \phi(c_0, c_1, \ldots, c_m) \leq r'.\]
By Downward L\"owenheim-Skolem (Proposition~\ref{propDownLS}) we can find a countable $\mc{M}_0 \preceq_L \mc{M}$ containing $a_0, a_1, \ldots, a_m$.  Using the remaining constant symbols to enumerate $\mc{M}_0$ as $\overline{a}$, we have
\[\tup{\mc{M}, \overline{a}} \in \left(\mc{W} \cap \op{Mod}_{L_C}(T)\right)\setminus\op{Mod}_{L_C}(\phi(c_0, \ldots, c_m)),\] and $R_{T, \mb{i}}(\tup{\mc{M}, \overline{a}}) = \tup{\mc{M}, a_0}$.
\end{proof}

We now have all of the ingredients necessary to prove our main result.

\begin{thm}[Omitting Types]\label{thmOmittingTypes}
Let $T$ be a satisfiable $L$-theory and let $\set{\Sigma_j(\overline{x}_j)}_{j < \omega}$ be a countable set of types of $T$ that are not principal over $T$.  Then there is a model of $T$ that omits each $\Sigma_j$.
\end{thm}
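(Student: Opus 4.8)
The plan is to run the classical Baire category argument for omitting types inside the space $X = \mc{W} \cap \op{Mod}_{L_C}(T)$, which by Corollary~\ref{lemWBaire} is nonempty and Baire. For each $j < \omega$ write $n_j$ for the number of free variables of $\Sigma_j$, and for each index tuple $\mb{i} = \tup{i_1, \ldots, i_{n_j}} \in \omega^{n_j}$ set $\overline{c}_{\mb{i}} = \tup{c_{i_1}, \ldots, c_{i_{n_j}}}$. I would define
\[
A_{j, \mb{i}} = R_{T, \mb{i}}^{-1}\left(\op{Mod}_{L_{\overline{c}_{\mb{i}}}}\!\left(T \cup \Sigma_j(\overline{c}_{\mb{i}})\right)\right),
\]
the set of those $\tup{\mc{M}, \overline{a}} \in X$ for which the tuple $\tup{a_{i_1}, \ldots, a_{i_{n_j}}}$ realizes $\Sigma_j$ in $\mc{M}$. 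Since there are only countably many pairs $(j, \mb{i})$, this is a countable family.

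The heart of the argument is to show that each $A_{j, \mb{i}}$ is closed and nowhere dense in $X$. Closedness is immediate, since $\op{Mod}_{L_{\overline{c}_{\mb{i}}}}(T \cup \Sigma_j(\overline{c}_{\mb{i}}))$ is a closed class and $R_{T, \mb{i}}$ is continuous by Lemma~\ref{lemRTi}. For nowhere density I would invoke non-principality: because $\Sigma_j$ is not principal over $T$, Lemma~\ref{lemPrincipalInterior} gives that $\op{Mod}_{L_{\overline{c}_{\mb{i}}}}(T \cup \Sigma_j(\overline{c}_{\mb{i}}))$ has empty interior in $\op{Mod}_{L_{\overline{c}_{\mb{i}}}}(T)$. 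Since $R_{T, \mb{i}}$ is open (Lemma~\ref{lemRTi}), its preimage of a closed set with empty interior again has empty interior: if a nonempty open $U \subseteq A_{j, \mb{i}}$ existed, then $R_{T, \mb{i}}(U)$ would be a nonempty open subclass contained in $\op{Mod}_{L_{\overline{c}_{\mb{i}}}}(T \cup \Sigma_j(\overline{c}_{\mb{i}}))$, contradicting empty interior. As $A_{j, \mb{i}}$ is closed, empty interior is exactly nowhere density.

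With the $A_{j, \mb{i}}$ closed and nowhere dense and only countably many of them, the Baire property of $X$ furnishes a point $\tup{\mc{M}, \overline{a}} \in X \setminus \bigcup_{j, \mb{i}} A_{j, \mb{i}}$. I would take $\mc{N} = \mc{M} \upharpoonright \tup{\overline{a}}$; by Lemma~\ref{lemElementary} we have $\mc{N} \preceq_L \mc{M}$, and since $\mc{M} \models T$ this yields $\mc{N} \models T$. Because $S$ has no function symbols, every element of $\mc{N}$ is named by some constant of $S \cup C$, so every $n_j$-tuple from $\mc{N}$ arises as the $R_{T, \mb{i}}$-image of $\tup{\mc{M}, \overline{a}}$ for some $\mb{i}$ (absorbing the finitely many $S$-constants into the index set). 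Were such a tuple to realize $\Sigma_j$ in $\mc{N}$ — equivalently, by $\mc{N} \preceq_L \mc{M}$, in $\mc{M}$ — then $\tup{\mc{M}, \overline{a}}$ would lie in $A_{j, \mb{i}}$, contrary to its choice. Hence $\mc{N}$ is a model of $T$ omitting every $\Sigma_j$.

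I expect the remaining work to be bookkeeping rather than conceptual, since Corollary~\ref{lemWBaire}, Lemma~\ref{lemPrincipalInterior}, and Lemma~\ref{lemRTi} already package the hard topological content. The step requiring the most care is the reduction in the final paragraph, where one must verify that \emph{every} tuple of $\mc{N}$ is captured by some $A_{j, \mb{i}}$; this is exactly where the substructure-generation in Lemma~\ref{lemElementary} and the no-function-symbol hypothesis enter. For a general signature this is precisely the point that forces the passage from tuples of constants $\tup{c_{i_1}, \ldots, c_{i_{n_j}}}$ to tuples of terms, handled via the closure of $L$ under substitution of terms for variables, while the topological skeleton of the proof stays unchanged.
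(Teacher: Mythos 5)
Your proposal is correct and follows essentially the same route as the paper's own proof: your sets $A_{j,\mb{i}}$ are exactly the paper's $\mc{C}_{T,j,\mb{i}}$, and the closed--nowhere-dense--Baire argument in $\mc{W} \cap \op{Mod}_{L_C}(T)$, the appeal to Lemmas~\ref{lemPrincipalInterior}, \ref{lemRTi}, \ref{lemWBaire}, and~\ref{lemElementary}, and the restriction to signatures without function symbols all coincide with what the paper does. Even your closing remark, that the general case is handled by substituting terms into the types (i.e.\ using Definition~\ref{defnPrincipal2} in place of Definition~\ref{defnPrincipal1}), is precisely the paper's own treatment of function symbols.
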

\begin{proof}
For each $j < \omega$, write $\overline{x}_j = (x_0, \ldots, x_{n_j-1})$.  Then for $\mb{i} \in \omega^{n_j}$, define
\[\mc{C}_{T, j, \mb{i}} = R_{T, \mb{i}}^{-1}\left(\op{Mod}_{L_{\set{c_{i_0}, \ldots, c_{i_{n_j-1}}}}}(T \cup \Sigma_j(c_{i_0}, \ldots, c_{i_{n_j-1}}))\right) \subseteq \mc{W} \cap \op{Mod}_{L_C}(T).\]
By Lemmas~\ref{lemPrincipalInterior} and~\ref{lemRTi}, each $\mc{C}_{T, j, \mb{i}}$ is closed with empty interior.  Hence $\bigcup_{j < \omega, \mb{i} \in \omega^{n_j}}\mc{C}_{T, j, \mb{i}}$ is meagre in $\mc{W} \cap \op{Mod}_{L_C}(T)$.  Since $\mc{W} \cap \op{Mod}_{L_C}(T)$ is Baire (Lemma~\ref{lemWBaire}), there exists
\[\tup{\mc{M}, \overline{a}} \in \left(\mc{W} \cap \op{Mod}_{L_C}(T)\right) \setminus \bigcup_{j < \omega, \mb{i} \in \omega^{n_j}}\mc{C}_{T, j, \mb{i}}.\]
For such $\tup{\mc{M}, \overline{a}}$ we have by definition of the $\mc{C}_{T, j, \mb{i}}$'s that for every $j<\omega$ no subset of $\overline{a}$ is a realization of $\Sigma_j$.  Since we are in the case where there are no function symbols, $\overline{a}$ is the universe of a structure $\mc{M}_0$.  By Lemma~\ref{lemElementary}, $\mc{M}_0 \preceq_L \mc{M}$.  Thus $\mc{M}_0 \models T$ and omits every $\Sigma_j$.
\end{proof}

The preceding proof generalizes in a straightforward way to the case where the signature contains function symbols, but it is necessary to give a stronger definition of principal type.  The only difficulty is that when there are function symbols present not every subset of a structure is the universe of a substructure, so in the proof of Theorem~\ref{thmOmittingTypes} we need to take $\mc{M}_0$ to be $\mc{M} \upharpoonright \tup{\overline{a}}$.  The proof of Lemma~\ref{lemElementary} works even with function symbols present, so we still have that $\mc{M}_0 \preceq \mc{M}$, but we now need to prove that no subset of $\mc{M} \upharpoonright \tup{\overline{a}}$ realizes any of the $\Sigma_j$.  To do this, we introduce terms into the definition of principality.

\begin{defn}\label{defnPrincipal2}
Let $T$ be an $L$-theory.  A type $\Sigma(\overline{x})$ of $T$ \defined{principal} over $T$ is there is an $L$-formula $\phi(\overline{x})$, terms $t_1(\overline{y}), \ldots, t_n(\overline{y})$ (where $n$ is the length of $\overline{x}$), and $r \in \mathbb{Q} \cap (0, 1)$ such that the following hold:
\begin{itemize}
\item{
$T \cup \set{\phi(\overline{y})}$ is satisfiable, and
}
\item{
$T \cup \set{\phi(\overline{y}) \geq r} \models \Sigma(t_1(\overline{y}), \ldots, t_n(\overline{y}))$.
}
\end{itemize}
\end{defn}

The modification of principality to include terms was used by Keisler and Miller \cite{Keisler2001} in the context of discrete logic without equality, and independently by Caicedo and Iovino \cite{Caicedo2012} for $[0, 1]$-valued logic.  Taking Definition~\ref{defnPrincipal2} as the definition of principality, we may assume that whenever $\Sigma(\overline{x})$ is a type we wish to omit, and $t_1(\overline{y}), \ldots, t_n(\overline{y})$ are terms, then $\Sigma(t_1(\overline{y}), \ldots, t_n(\overline{y}))$ is also one of the types to be omitted.  Then we have that no subset of $\mc{M} \upharpoonright \tup{\overline{a}}$ realizes any of the types we wish to omit since elements of $\mc{M} \upharpoonright \tup{\overline{a}}$ are obtained from $\overline{a}$ by applying terms. \hfill\qed\newline

\begin{rem}
By assuming additional set-theoretic axioms it is possible to extend Theorem~\ref{thmOmittingTypes} to allow a collection of fewer than $2^{\aleph_0}$ non-principal types to be omitted.  To do this, observe that $\mc{W}$ has a countable base, so Martin's Axiom restricted to countable partial orders implies that $\mc{W}$ is $2^{\aleph_0}$-Baire (see Remark~\ref{remSetTheoretic}).  Then the same proof as above can be applied to a collection of fewer than $2^{\aleph_0}$ non-principal types.  If $T$ is a theory in a countable fragment $L$ of $\IL$ then there are at most $2^{\aleph_0}$ types of $T$.  There are theories in which every model realizes a non-isolated type (see \cite[Example 2.3.1]{Chang1990} for an example in the discrete case), so it is not generally possible to omit $2^{\aleph_0}$ non-prinicipal types.  Thus under the Continuum Hypothesis it is not always possible to omit $\aleph_1$ non-principal types.  These observations show that the extension of Theorem~\ref{thmOmittingTypes} to omitting $\aleph_1$ non-principal types is undecidable on the basis of ZFC.
\end{rem}
\subsection{Omitting Types in Complete Structures}\label{subsectionCompleteStructures}
In applications of $[0, 1]$-valued logics it is sometimes desirable to be able to produce metric structures based on \emph{complete} metric spaces.  There are two issues that need to be addressed in order to be able to take the metric completion of the structure obtained from Theorem~\ref{thmOmittingTypes}.  First, there are some types that may be omitted in a structure but not in its metric completion (such as the type of the limit of a non-convergent Cauchy sequence), so we need a stronger notion of principal type.  Second, because of the infinitary connectives, it may not be the case that every structure is elementarily equivalent to its metric completion.  

To resolve the first issue, we use the notion of \defined{metrically principal} types from \cite{Caicedo2012}.  If $\Sigma(x_1, \ldots, x_n)$ is a type, then for each $\delta \in \mathbb{Q} \cap (0, 1)$ we define:
\[\Sigma^\delta = \set{\sup_{y_1} \ldots \sup_{y_n} \left(\bigwedge_{k \leq n} d(x_k, y_k) \leq \delta \wedge \sigma(y_1, \ldots, y_n)\right) : \sigma \in \Sigma}.\]
We think of $\Sigma^\delta$ as a thickening of $\Sigma$, since if $\mc{M}$ is a structure and $a_1, \ldots, a_n \in \mc{M}$ realize $\Sigma$, then every $n$-tuple in the closed $\delta$-ball around $(a_1, \ldots, a_n)$ realizes $\Sigma^\delta$.

\begin{defn}
Let $L$ be a fragment of $\IL$ and let $T$ be an $L$-theory.  We say that a type $\Sigma(\overline{x})$ of $T$ is \defined{metrically principal} over $T$ if for every $\delta > 0$ the type $\Sigma^\delta(\overline{x})$ is principal over $T$.
\end{defn}

\begin{prop}\label{propMetricOmitting}
Let $L$ be a countable fragment of $\IL$, and let $T$ be a satisfiable $L$-theory.  For each $n < \omega$, suppose that $\Sigma_n$ is a type that is not metrically principal.  Then there is $\mc{M} \models T$ such that the metric completion of $\mc{M}$ omits each $\Sigma_n$.
\end{prop}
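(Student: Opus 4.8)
The plan is to reduce the completion-omitting statement to the ordinary Omitting Types Theorem (Theorem~\ref{thmOmittingTypes}) applied to the thickened types $\Sigma_n^\delta$. The key observation motivating the definition of $\Sigma_n^\delta$ is its behaviour under metric completion: if $\overline{b}$ is a tuple in $\overline{\mc{M}}$ that realizes $\Sigma_n$, then by density there is, for every $\delta > 0$, a tuple $\overline{a}$ from $\mc{M}$ within $\delta$ of $\overline{b}$, and I claim such $\overline{a}$ realizes $\Sigma_n^{\delta}$ in $\mc{M}$. Thus if $\overline{\mc{M}}$ realizes $\Sigma_n$, then $\mc{M}$ realizes $\Sigma_n^{\delta}$ for every $\delta$. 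Taking the contrapositive, it suffices to build $\mc{M} \models T$ that omits every $\Sigma_n^{\delta}$ (for all $n < \omega$ and all $\delta \in \mathbb{Q} \cap (0,1)$) in order to guarantee that $\overline{\mc{M}}$ omits every $\Sigma_n$.

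First I would verify the density claim carefully, since it is the crux. Given $\overline{b} \in \overline{\mc{M}}$ realizing $\Sigma_n$ and a witness tuple $\overline{a} \in \mc{M}$ with $\overline{d}(\overline{a}, \overline{b}) \leq \delta$, I must show $\sigma^{\delta}(\overline{a}) = 1$ for each $\sigma \in \Sigma_n$; unpacking the definition, $\sigma^{\delta}$ asserts the existence of $\overline{y}$ within $\delta$ of $\overline{x}$ satisfying $\sigma$, and $\overline{b}$ itself is such a witness in $\overline{\mc{M}}$. The subtlety is that I need the truth value of $\sigma^{\delta}$ computed in $\mc{M}$ to match, so I would invoke that $\sigma^{\delta}$ is built from the infinitary connectives of a \emph{countable} fragment and use a completion-elementarity argument; concretely, I would pass to the smallest countable fragment $L'$ containing all the $\sigma^{\delta}$ and argue via the approximate-existential semantics of $\sup_y$. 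This is where some care is genuinely required, because in general $\mc{M} \not\preceq_L \overline{\mc{M}}$ when $L$ is not a continuous fragment.

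Next, I would set up the application of Theorem~\ref{thmOmittingTypes}. The hypothesis gives that each $\Sigma_n$ is not metrically principal, which by definition means there exists some $\delta_n \in \mathbb{Q} \cap (0,1)$ for which $\Sigma_n^{\delta_n}$ is not principal over $T$. I enumerate the resulting countable collection $\{\Sigma_n^{\delta_n} : n < \omega\}$ of non-principal types and apply Theorem~\ref{thmOmittingTypes} to obtain a (countable) model $\mc{M} \models T$ omitting every $\Sigma_n^{\delta_n}$. Then for each $n$, since $\mc{M}$ omits $\Sigma_n^{\delta_n}$, the density claim (in contrapositive) shows $\overline{\mc{M}}$ omits $\Sigma_n$, completing the proof.

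The main obstacle I anticipate is the density/elementarity step connecting realizations in $\overline{\mc{M}}$ to realizations of the thickening in $\mc{M}$, precisely because the infinitary $\sup_{n<\omega}$ connectives mean $\mc{M}$ and $\overline{\mc{M}}$ need not agree on all $L$-formulas. I would handle this by working with the approximate semantics: $\mc{M} \models (\sup_y \psi)$ is equivalent to $\psi$ being satisfied to within every $\epsilon$, and uniform continuity of the relevant functions (guaranteed by the moduli of continuity in the signature) lets me transfer a near-realization in $\overline{\mc{M}}$ down to $\mc{M}$ up to the $\delta$-thickening. Everything else—the choice of $\delta_n$ and the bookkeeping of the countable family of thickenings—is routine once the transfer principle for $\Sigma^\delta$ is established.
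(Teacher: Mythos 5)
Your overall route is the same as the paper's: choose $\delta_n$ so that $\Sigma_n^{\delta_n}$ is not principal over $T$, apply Theorem~\ref{thmOmittingTypes} to get $\mc{M} \models T$ omitting every $\Sigma_n^{\delta_n}$, and deduce that $\overline{\mc{M}}$ omits every $\Sigma_n$ from the thickening property of $\Sigma^\delta$. The problem is the transfer step, which you correctly identify as the crux but do not close, and which cannot be closed by the tools you invoke. The claim you need --- if $\overline{b}$ realizes $\Sigma_n$ in $\overline{\mc{M}}$ and $\overline{a}$ is a tuple from $\mc{M}$ within $\delta_n$ of $\overline{b}$, then $\overline{a}$ realizes $\Sigma_n^{\delta_n}$ \emph{with values computed in $\mc{M}$} --- is false for a general countable fragment. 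The moduli of continuity in the signature control only atomic formulas and their finitary combinations; an infinitary formula need not define a continuous function on a structure, and $\mc{M} \preceq_L \overline{\mc{M}}$ can fail, as you yourself note. Concretely, let $M = \set{2^{-m} : m < \omega}$ with the metric inherited from $[0,1]$ and constants $c_m = 2^{-m}$, and let $\psi(x) = \inf_{m<\omega} \min\set{2^{m+1} d(x, c_m),\, 1}$ (the truncated sums are expressible from $\LI$ and $\neg$). Then $\psi$ is quantifier-free, so its values at points of $M$ agree in $\mc{M}$ and $\overline{\mc{M}}$; one computes $\psi \equiv 0$ on $M$ while $\psi^{\overline{\mc{M}}}(0) = 1$. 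Thus $0$ realizes $\Sigma = \set{\psi}$ in $\overline{\mc{M}}$, and every point $2^{-k} \leq \delta$ lies within $\delta$ of $0$ and realizes $\Sigma^\delta$ in $\overline{\mc{M}}$ (with witness $y = 0$), but the unique formula of $\Sigma^\delta$ is identically $0$ when evaluated in $\mc{M}$, so $\mc{M}$ omits $\Sigma^\delta$ outright. Density plus realization upstairs does not give realization downstairs; your ``completion-elementarity'' sketch and the appeal to uniform continuity are exactly what fail here.

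For comparison: the paper's own proof of this proposition treats the step you flagged as immediate --- it applies the observation following the definition of $\Sigma^\delta$ inside the single structure $\overline{\mc{M}}$, concluding that the nearby tuples $\overline{a}_k$ satisfy $\Sigma_n^{\delta_n}$, and then cites omission of $\Sigma_n^{\delta_n}$ in $\mc{M}$ as a contradiction, without addressing the distinction between realization in $\overline{\mc{M}}$ and realization in $\mc{M}$. So you have located a real subtlety rather than invented one; but your proposal does not resolve it, and the example above shows that no general transfer principle of the kind you describe can resolve it. What does make the argument go through is an added hypothesis: if $L$ is a continuous fragment (Definition~\ref{defnFragmentCts}), then $\mc{M} \preceq_L \overline{\mc{M}}$, and since fragments are closed under the operations used to form $\Sigma^\delta$, realization of $\Sigma_n^{\delta_n}$ by a tuple of $\mc{M}$ transfers verbatim from $\overline{\mc{M}}$ to $\mc{M}$; this is precisely the hypothesis under which the subsequent Omitting Types Theorem for Complete Structures operates. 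As written, for an arbitrary countable fragment, your proof is incomplete at exactly the point you yourself marked as requiring care.
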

\begin{proof}
For each $n < \omega$, let $\delta_n > 0$ be such that $\Sigma_n^{\delta_n}$ is non-principal.  Using Theorem~\ref{thmOmittingTypes} we get $\mc{M} \models T$ that omits each $\Sigma_n^{\delta_n}$.  Fix $n < \omega$; we show that $\overline{\mc{M}}$, the metric completion of $\mc{M}$, omits $\Sigma_n$.  Suppose otherwise, and let $\overline{a} \in \overline{\mc{M}}$ be a realization of $\Sigma_n$ in $\overline{\mc{M}}$.  By definition of the metric completion there are $\overline{a}_1, \overline{a}_2, \ldots$ from $\mc{M}$ converging (coordinatewise) to $\overline{a}$.  For $k$ sufficiently large we then have that $\overline{a}_k$ is in the $\delta_n$-ball around $\overline{a}$.  As we observed earlier, this implies that $\overline{a}_k$ satisfies $\Sigma_n^{\delta_n}$, contradicting that $\Sigma_n^{\delta_n}$ is not realized in $\mc{M}$.
\end{proof}

The final problem to be resolved in order to have a satisfactory Omitting Types Theorem for complete structures is that we may not have $\mc{M} \equiv_L \overline{\mc{M}}$.  This problem arises because if $\phi(x)$ is a formula of $\IL$ and $\mc{M}$ is a structure, then the function from $\mc{M}$ to $[0, 1]$ given by $a \mapsto \phi^{\mc{M}}(a)$ may not be continuous.  Recall that a fragment $L$ of $\IL(S)$ is \defined{continuous} if $a \mapsto \phi^{\mc{M}}(a)$ is a continuous function for every $S$-structure $\mc{M}$ and every $L$-formula $\phi$ (Definition~\ref{defnFragmentCts}).  Applying Proposition~\ref{propMetricOmitting} we therefore have:

\begin{thm}[Omitting Types for Complete Structures]
Let $L$ be a countable continuous fragment of $\IL$, and let $T$ be a satisfiable $L$-theory.  For each $n < \omega$ let $\Sigma_n$ be a type that is not metrically principal.  Then there is $\mc{M} \models T$ such that $\mc{M}$ is based on a complete metric space and $\mc{M}$ omits each $\Sigma_n$.
\end{thm}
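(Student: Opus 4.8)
The plan is to reduce immediately to Proposition~\ref{propMetricOmitting} and then use the continuity of the fragment to promote the metric completion to a model of $T$. First I would apply Proposition~\ref{propMetricOmitting} to the countable fragment $L$, the satisfiable theory $T$, and the types $\Sigma_n$ (which are not metrically principal), obtaining a structure $\mc{N} \models T$ whose metric completion $\overline{\mc{N}}$ omits every $\Sigma_n$. By construction $\overline{\mc{N}}$ is based on a complete metric space, so the only point still requiring verification is that $\overline{\mc{N}}$ is itself a model of $T$.

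This last verification is exactly where the continuity hypothesis on $L$ enters, and it is the crux of the argument. For an arbitrary fragment one need not have $\mc{N} \equiv_L \overline{\mc{N}}$, since an infinitary formula may define a discontinuous truth-value function whose value changes in passing to the completion; this is precisely the obstruction flagged in the discussion preceding the theorem. However, since $L$ is continuous (Definition~\ref{defnFragmentCts}), the map $\overline{a} \mapsto \phi^{\mc{N}}(\overline{a})$ is continuous for every $L$-formula $\phi$, and as observed earlier in the excerpt this continuity yields $\mc{N} \preceq_L \overline{\mc{N}}$. In particular $\sigma^{\overline{\mc{N}}} = \sigma^{\mc{N}}$ for every $L$-sentence $\sigma$, so $\mc{N} \equiv_L \overline{\mc{N}}$. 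Because $T$ is an $L$-theory and $\mc{N} \models T$, it follows that $\overline{\mc{N}} \models T$.

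Putting these together, $\overline{\mc{N}}$ is based on a complete metric space, satisfies $T$, and omits each $\Sigma_n$; taking $\mc{M} = \overline{\mc{N}}$ completes the proof. I expect essentially no computation here: the entire content lies in Proposition~\ref{propMetricOmitting} together with the fact that a continuous fragment cannot distinguish a structure from its completion. The main (though mild) obstacle is simply recognizing that continuity is exactly the hypothesis that repairs the potential failure of $\mc{N} \equiv_L \overline{\mc{N}}$, which is what allows the completion to remain a model of $T$ rather than merely a complete metric space on which the $\Sigma_n$ are omitted.
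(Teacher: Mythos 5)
Your proposal is correct and is essentially identical to the paper's argument: the paper likewise obtains $\mc{M} \models T$ from Proposition~\ref{propMetricOmitting} whose completion omits every $\Sigma_n$, and then invokes the observation that for a continuous fragment $\mc{M} \preceq_L \overline{\mc{M}}$, so the completion remains a model of $T$. You have correctly identified that the continuity hypothesis is exactly what repairs the possible failure of $\mc{M} \equiv_L \overline{\mc{M}}$, which is the only point the paper's (implicit) proof needs beyond Proposition~\ref{propMetricOmitting}.
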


\section{Applications}\label{SectionApplication}
In this section we apply the Omitting Types Theorem to obtain a $[0, 1]$-valued version of Keisler's two-cardinal theorem (see \cite[Theorem 30]{Keisler1971}).  We will then apply the two-cardinal theorem to strengthen a result of Ben Yaacov and Iovino \cite{benYaacov2009} related to separable quotients of Banach spaces.

We begin with an easy lemma about metric spaces.

\begin{lem}\label{lemApplication}
Let $(M, d)$ be a metric space of density $\lambda$, where $\op{cof}(\lambda) > \omega$.  Then there is $R \in \mathbb{Q} \cap (0, 1)$ and a set $D \subseteq M$ with $\abs{D} = \lambda$ such that for all $x, y \in D$, $d(x, y) \geq R$, and for all $x \in M$ there exists $y \in D$ with $d(x, y) < R$.
\end{lem}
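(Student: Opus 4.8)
The plan is to produce $D$ as a maximal $R$-separated subset of $M$ for a suitably chosen rational $R$, and to use the hypothesis $\op{cof}(\lambda) > \omega$ to control its cardinality. Call a set $D \subseteq M$ \emph{$R$-separated} if $d(x,y) \geq R$ whenever $x, y \in D$ are distinct. For any fixed $R > 0$, Zorn's Lemma yields a maximal $R$-separated set $D_R$, and maximality immediately gives the covering property: for every $x \in M$ there is $y \in D_R$ with $d(x,y) < R$, since otherwise $D_R \cup \set{x}$ would be a strictly larger $R$-separated set. Thus any maximal $R$-separated set already satisfies both the separation and the covering requirements of the lemma, and the only remaining task is to arrange that its cardinality is exactly $\lambda$.

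Two cardinality bounds do the work. First, every $R$-separated set has cardinality at most $\lambda$: the open balls $B(x, R/2)$ for $x \in D_R$ are pairwise disjoint, and a dense set of size $\lambda$ must meet each of them in a distinct point, so $\abs{D_R} \leq \lambda$. Second, unions of maximal separated sets over radii tending to $0$ are dense. Concretely, I would fix the rationals $R_n = \frac{1}{n+2} \in \mathbb{Q} \cap (0,1)$ and, for each $n$, a maximal $R_n$-separated set $D_n$. Given $x \in M$ and $\epsilon > 0$, choosing $n$ with $R_n < \epsilon$ and applying the covering property of $D_n$ produces $y \in D_n$ with $d(x,y) < R_n < \epsilon$; hence $\bigcup_{n < \omega} D_n$ is dense in $M$ and therefore has cardinality at least $\lambda$.

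The cofinality hypothesis now enters. Suppose, for contradiction, that $\abs{D_n} < \lambda$ for every $n$. Since $\op{cof}(\lambda) > \omega$, any countable family of cardinals below $\lambda$ has supremum $\mu < \lambda$; as $\lambda$ is then uncountable, we obtain $\abs{\bigcup_{n < \omega} D_n} \leq \aleph_0 \cdot \mu < \lambda$, contradicting that this union is dense. Hence $\abs{D_n} = \lambda$ for some $n$, and taking $R = R_n$ and $D = D_n$ completes the proof.

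The main obstacle is the cardinality computation in the last paragraph; the remaining ingredients (Zorn's Lemma, the disjoint-balls bound, and density of the union) are routine. The essential point is that $\op{cof}(\lambda) > \omega$ is precisely what forbids a dense set of full size $\lambda$ from being assembled out of countably many separated sets each of size strictly below $\lambda$, which is why the hypothesis cannot be dropped.
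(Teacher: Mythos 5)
Your proof is correct, but it is organized quite differently from the paper's. The paper runs a transfinite recursion of length $\lambda$: at stage $\beta < \lambda$ the set $\set{x_\alpha : \alpha < \beta}$ built so far has cardinality $< \lambda$ and hence cannot be dense, so one can pick $x_\beta$ together with a rational radius $R_\beta$ separating it from all earlier points; then, since the $R_\beta$ take only countably many values, uncountable cofinality yields a single $R$ and a set $S \in [\lambda]^\lambda$ with $R_\alpha = R$ for all $\alpha \in S$, and finally $\set{x_\alpha : \alpha \in S}$ is extended to a maximal $R$-separated set to obtain the covering property. You instead fix in advance the countably many radii $R_n = \frac{1}{n+2}$, take a maximal $R_n$-separated set $D_n$ for each, observe that $\bigcup_n D_n$ is dense and hence of size $\geq \lambda$, and apply the cofinality hypothesis to the cardinals $\abs{D_n}$ rather than to the fibers of the map $\beta \mapsto R_\beta$. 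Both arguments ultimately rest on the same pigeonhole fact (a countable union of sets of size $< \lambda$ has size $< \lambda$ when $\op{cof}(\lambda) > \omega$), and both invoke maximality of a separated set for the covering half of the conclusion; the difference is where the pigeonhole is applied. Your version avoids the $\lambda$-step recursion entirely, replacing it with Zorn's Lemma plus a one-line cardinal computation, which makes the role of the cofinality hypothesis especially transparent; the paper's version constructs the size-$\lambda$ separated set directly and needs the upper bound $\abs{D_R} \leq \lambda$ (your disjoint-balls argument) only implicitly, since its set comes with cardinality exactly $\lambda$ by construction. One small reading note: the separation condition in the statement should of course be understood for distinct $x, y \in D$, as your definition of $R$-separated makes explicit.
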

\begin{proof}
Build a sequence $\set{x_\alpha : \alpha < \lambda}$ in $M$ recursively, starting from an arbitrary $x_0 \in M$.  Given $\set{x_\alpha : \alpha < \beta}$, with $\beta < \lambda$, we have that $\set{x_\alpha : \alpha < \beta}$ is not dense in $M$.  Hence there exists $x_\beta \in M$ and $R_\beta \in \mathbb{Q} \cap (0, 1)$ such that $d(x_\beta, x_\alpha) \geq R_\beta$ for all $\alpha < \beta$.  Then since $\op{cof}(\lambda) > \omega$ there is $R \in \mathbb{Q} \cap (0, 1)$ and $S \in [\lambda]^{\lambda}$ such that $R = R_\alpha$ for every $\alpha \in S$.  Then $D = \set{x_\alpha : \alpha \in S}$ can be extended to the desired set.
\end{proof}

It will be important for us that certain predicates take values only in $\{0, 1\}$, and that this can be expressed in our logic.  For any formula $\phi(x)$, we define the formula $\Disc(\phi)$ to be $\phi(x) \vee \neg\phi(x)$.  It is clear that if $\mc{M} \models \forall x \Disc(\phi(x))$, then $\phi^{\mc{M}}(a) \in \{0, 1\}$ for every $a \in \mc{M}$; in this case we say that $\phi$ is \defined{discrete} in $\mc{M}$.  Note that if $\phi(x)$ is discrete in models of a theory $T$ then we can relativize quantifiers to $\set{x : \phi(x) = 1}$ in models of $T$.  We emphasize that discreteness of $\phi$ only means that $\phi$ takes values in $\{0, 1\}$, \emph{not} that the metric is discrete on $\set{x : \phi(x) = 1}$.

\begin{defn}
If $S$ is a metric signature with a distinguished unary predicate $U$, and $\kappa, \lambda$ are infinite cardinals, then we say that an $S$-structure $\mc{M} = \tup{M, U, \ldots}$ is \defined{of type $(\kappa, \lambda)$} if the density of $M$ is $\kappa$ and the density of $\set{a \in M : U(a) = 1}$ is $\lambda$.
\end{defn}

\begin{thm}\label{thmKeisler}
Let $S$ be a metric signature with a distinguished unary predicate symbol $U$, and let $L$ be a countable fragment of $\IL(S)$.  Let $T$ be an $L$-theory such that $T \models \forall x \Disc(U(x))$, and let $\mc{M} = \tup{M, V, \ldots}$ be a model of $T$ of type $(\kappa, \lambda)$ where $\kappa > \lambda \geq \aleph_0$.  Then there is a model $\mc{N} = \tup{N, W, \ldots} \equiv_L \mc{M}$ of type $(\aleph_1, \aleph_0)$.  Moreover, there is a model $\mc{M}_0 = \tup{M_0, V_0, \ldots}$ such that $\mc{M}_0 \preceq_L \mc{M}, \mc{M}_0 \preceq_L \mc{N}$, and $V_0$ is dense in $W$.
\end{thm}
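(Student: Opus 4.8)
The plan is to realize $\mc{N}$ as the metric completion of the union of a continuous elementary chain $\tup{\mc{M}_\alpha : \alpha < \omega_1}$ of countable $L$-structures, each containing a fixed countable $\mc{M}_0 \preceq_L \mc{M}$ as an $L$-elementary substructure, arranged so that the density of the ambient space strictly increases at every successor while $\set{x : U(x) = 1}$ never escapes the closure of $V_0$, the $U$-part of $\mc{M}_0$. First I would invoke Downward L\"owenheim--Skolem (Proposition~\ref{propDownLS}) to fix a countable $\mc{M}_0 \preceq_L \mc{M}$; this is the structure demanded by the ``Moreover'' clause, and $V_0$ is automatically countable. Because Lemma~\ref{lemApplication} requires uncountable cofinality, I would replace $\mc{M}$ by an elementary substructure of type $(\lambda^+, \lambda)$ (harmless for the conclusion since $\preceq_L$ is transitive, and available because $\kappa > \lambda$: take a hull of density $\lambda^+$ containing a dense subset of the $U$-part), and then apply Lemma~\ref{lemApplication} to the ambient space to extract a single rational $R \in \mathbb{Q} \cap (0,1)$ and an $R$-separated set of size $\lambda^+$. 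Since $\set{x:U(x)=1}$ has density only $\lambda < \lambda^+$, all but $\lambda$ many of these separated points lie at distance at least $R$ from the $U$-part; this is the uniform scale I would use both to grow the universe and to express the types below.

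The successor step is where Theorem~\ref{thmOmittingTypes} enters. Given a countable $\mc{M}_\alpha$ with $\mc{M}_0 \preceq_L \mc{M}_\alpha \equiv_L \mc{M}$ and $V_0$ dense in $U^{\mc{M}_\alpha}$, I would pass to the elementary $L$-diagram of $\mc{M}_\alpha$, using a constant $c_a$ for each $a \in \mc{M}_\alpha$ and one further constant $e$, and adjoin the sentences $d(e, c_a) \geq R$ for all $a \in \mc{M}_\alpha$, forcing $e$ to be a new $R$-separated point so that the density strictly increases. To keep the $U$-part from growing I would omit, for each rational $\epsilon > 0$, the type
\[ q_\epsilon(x) = \set{U(x) \geq 1} \cup \set{d(x, c_a) \geq \epsilon : a \in V_0}, \]
since a structure omitting every $q_\epsilon$ has the property that each of its elements satisfying $U(x) = 1$ lies in the closure of $V_0$, which is exactly the statement that $V_0$ remains dense in the new $U$-part. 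Applying Theorem~\ref{thmOmittingTypes} to this countable diagram theory then produces a countable $\mc{M}_{\alpha+1} \succeq_L \mc{M}_\alpha$ containing a point $R$-far from $\mc{M}_\alpha$ and omitting every $q_\epsilon$, as required.

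Two points will require care, both stemming from the absence of a compactness theorem. The first, which I expect to be the genuine crux, is that each $q_\epsilon$ must be \emph{non-principal} over the running diagram theory, for this is the point at which the inequality $\kappa > \lambda$ must be converted into a syntactic statement. Principality would mean that some formula $\phi$ consistent with the theory, together with a threshold $r$, confines all of its high-value realizations to the density-$\lambda$ set $\set{x:U(x)=1}$ while keeping them at distance at least $\epsilon$ from $V_0$; I would rule this out using the $R$-separated set of size $\lambda^+$ from Lemma~\ref{lemApplication}, which exhibits $\lambda^+$ many realizations of any such $\phi$ while the portion of the $U$-part lying $\epsilon$-far from $V_0$ has density at most $\lambda$, forcing a realization of $\phi$ outside $q_\epsilon$ and contradicting principality. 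The second point is that the satisfiability hypothesis of Theorem~\ref{thmOmittingTypes} must be re-established at every successor without compactness; I would handle this by extracting once, from the type-$(\lambda^+,\lambda)$ model and the gap $\kappa > \lambda$, a countable Vaughtian pair in the expanded language $L \cup \set{P}$ (with $P$ a discrete unary predicate naming an $L$-elementary substructure that contains the whole $U$-part and omits a point at distance at least $R$), and transporting the existence of the successor extension along the chain through $\equiv_L$. Arranging this pair to be sufficiently homogeneous that the transport is legitimate --- in effect, self-similar --- is the most technical part of the construction.

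Finally, at limit stages I would take unions, using that $L$-elementarity is preserved along elementary chains and that $V_0$ stays dense in each $U^{\mc{M}_\alpha}$ by construction, so that $\mc{N}' := \bigcup_{\alpha < \omega_1} \mc{M}_\alpha$ has density exactly $\aleph_1$ (from the accumulated $R$-separated points) with separable $U$-part. Passing to the metric completion $\mc{N} := \overline{\mc{N}'}$, I would appeal to the mechanism behind Proposition~\ref{propMetricOmitting} to guarantee that completion introduces no new points $x$ with $U(x) = 1$ outside the closure of $V_0$, so that $\mc{N}$ is genuinely of type $(\aleph_1, \aleph_0)$. Since $\mc{M}_0 \preceq_L \mc{N}$ and $\mc{M}_0 \preceq_L \mc{M}$ we obtain $\mc{N} \equiv_L \mc{M}$, and $V_0$ is dense in $W := U^{\mc{N}}$, which yields both the model of type $(\aleph_1, \aleph_0)$ and the common substructure $\mc{M}_0$ asserted in the statement.
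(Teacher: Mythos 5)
Your skeleton --- reduce to type $(\lambda^+,\lambda)$, fix a countable $\mc{M}_0 \preceq_L \mc{M}$, build an $\omega_1$-chain of countable models by applying Theorem~\ref{thmOmittingTypes} to a diagram-plus-new-constant theory while omitting the types that would add new points to the $U$-part, and take unions at limits --- is the same Vaught--Keisler strategy the paper follows. But the two issues you yourself flag as ``requiring care'' are exactly where the theorem's content lies, and your proposal does not fill either gap. The non-principality argument you sketch is a non sequitur: if $\phi$ and $r$ witnessed principality of $q_\epsilon$, then the high-value realizations of $\phi$ in models of the theory are by hypothesis confined to the $U$-part, whereas all but $\lambda$ of the $\lambda^+$ many $R$-separated points of Lemma~\ref{lemApplication} lie far from the $U$-part; those points are therefore \emph{not} realizations of $\phi$, and no contradiction can be extracted by comparing cardinalities or densities (nothing forces $\phi$ to have more than one realization at all). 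The paper's proof of the corresponding fact (Claim~\ref{claimApplication}) is the technical heart of the theorem and rests on a device your formulation lacks entirely: $\mc{M}$ is expanded by a discrete well-order $(L,\sqeq)$ of type $\kappa^+$, a constant $c$, and a function $f$ enumerating a dense subset of the $U$-part along the order below $c$ and the $R$-separated set above $c$; the new constant $d^*$ is placed \emph{on top of the order} rather than metrically far from everything, and the type to omit says ``$x$ is a new order element below $c$ with $U(f(x))=1$.'' This lets the paper internalize ``$\psi$ is consistent with the diagram theory'' as a single sentence of $\mc{M}_0'$ (``for every $z \in L$ there are $y$ above $z$ and $x \in L$ with $U(f(x))$ and $\psi(x,y)$ large''), transfer it to the big model along $\preceq_L$, and run a pigeonhole argument using the regularity of $\kappa^+$ to produce an \emph{old} element realizing $\psi$, refuting principality. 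Without the order there is no way to pull ``consistent with $T_\alpha$,'' which quantifies over arbitrary $L$-elementary extensions, back into any actual model; compactness is exactly what is missing.

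The same device is what makes the successor step repeatable, which is your second acknowledged gap: in a non-compact logic, $\equiv_L$ does not transfer ``has a proper $L$-elementary extension containing a point $R$-far from everything'' from one countable model to another, so the satisfiability of your stage-$(\alpha+1)$ theory is unjustified once $\mc{M}_\alpha$ is no longer inside $\mc{M}$; your proposed remedy (a ``self-similar,'' ``sufficiently homogeneous'' Vaughtian pair transported along the chain) is precisely the hard part of every two-cardinal theorem and is not carried out. In the paper, a model of $T_\alpha$ omitting $\Sigma_\alpha$ is again a structure of the same shape --- the order has grown on top by $d^*$ while nothing below $c$ has changed --- so the argument can be repeated along the chain. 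Finally, your last step is incorrect as stated: the theorem is for an \emph{arbitrary} countable fragment $L$, and for non-continuous fragments a structure need not be $\equiv_L$ its metric completion (the paper is explicit about this in Section~\ref{subsectionCompleteStructures}, and Proposition~\ref{propMetricOmitting} concerns metrically principal types, which you have not verified here). The completion is also unnecessary: type is defined by density, which completion does not change, and the paper's $\mc{N}$ is simply the union of the chain, whose $\aleph_1$ many pairwise $R$-separated new points and $U$-part with dense $V_0$ already give type $(\aleph_1,\aleph_0)$.
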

\begin{proof}
By Downward L\"owenheim-Skolem, we may assume that $\mc{M}$ is of type $(\kappa^+, \kappa)$ for some $\kappa \geq \aleph_0$.  Our first step is to expand $\mc{M}$ into a structure in a larger language that includes an ordering of a dense subset of $\mc{M}$ in type $\kappa^+$.  To do this we expand the signature $S$ to a new signature $S'$ by adding a unary predicate symbol $L$, a binary predicate symbol $\sqeq$, a constant symbol $c$, and a unary function symbol $f$.  Let $M'$ be the disjoint union of $M$ and $\kappa^+$.  Extend the metric $d$ from $M$ to a metric $d'$ on $M'$ by making $d'$ the discrete metric on $\kappa^+$ and setting $d'(m, \alpha) = 1$ for every $m \in M, \alpha \in \kappa^+$.  We interpret $L$ as $L(x) = 1$ if and only if $x \in \kappa^+$.  Interpret $c$ as $\kappa$, and let $\sqeq$ be the characteristic function of the ordinal ordering on $\kappa^+$, and arbitrary elsewhere. 

Find $D \subseteq M$ of size $\kappa^+$, and $R \in \mathbb{Q} \cap (0, 1)$, as in Lemma~\ref{lemApplication}.  Define $f : M' \to M'$ so that below $\kappa$ the function $f$ is an enumeration of a dense subset of $V$, from $\kappa$ to $\kappa^+$ $f$ is an enumeration of $D$, and $f$ is arbitrary otherwise.  This gives a metric structure $\mc{M}' = \tup{M', V, \ldots, \kappa^+, \sqeq, \kappa, f}$. 

Now let $\mc{M}_0' = \tup{M_0', V_0, \ldots, L_0, \sqeq_0, c_0, f_0}$ be a countable elementary substructure of $\mc{M}'$.  Add countably many new constant symbols $d_l$, $l \in L$, and another constant symbol $d^*$.  Let $T$ be the elementary diagram of $\mc{M}_0'$, together with the sentences $\set{d_l \sq d^* : l \in L}$.  Define
\[\Sigma(x) = \set{L(x)} \cup \set{U(f(x))} \cup \set{d(x, d_l) = 1 : l \sq c}.\]
We note that a model of $T$ that omits $\Sigma$ corresponds to a elementary extension of $\mc{M}_0'$ in which $V_0$ is dense in the interpretation of $U$.  The extension is proper because the interpretation of $d^*$ will satisfy $d(f(d^*), f(d_l)) \geq R$ for every $l$, and $f(d^*) \not\in L$, while every $m \in M_0' \setminus L$ satisfies $d(m, d_l) < R$ for some $l$.  We have $V_0$ dense in the interpretation of $U$ because the image of $f$ on elements of $L$ below $c$ is dense in $U$, and omitting $\Sigma$ ensures that no new such elements are added.

\begin{claim}\label{claimApplication}
$\Sigma(x)$ is non-principal over $T$.
\end{claim}
\begin{proof}
We note first that if $t$ is a term that is not a variable symbol or a constant symbol then $T \models \forall x \neg U(t(x))$.  It therefore suffices to show that if $\psi(x)$ is a formula consistent with $T$ and $r \in \mathbb{Q} \cap (0, 1)$, then $T \cup \set{\psi(x) \geq r} \not\models \Sigma(x)$.

Now suppose that $\psi(x)$ is consistent with $T$.  Let us write $\psi(x, d)$ to emphasize that the new constant symbol $d$ may appear.  If either $\psi(x, d) \wedge \neg L(x)$ or $\psi(x, d) \wedge L(x) \wedge \neg U(f(x))$ is consistent with $T$ then by definition of $\Sigma$, $T \cup \set{\psi(x, d)} \not\models \Sigma(x)$ and we are done.  So we may assume that $\psi(x, d) \wedge L(x) \wedge U(f(x))$ is consistent with $T$.  It follows from the definition of $T$ that
\[\mc{M}_0' \models \forall z \in L\,\sup_{y \in L}\,\sup_{x \in L}\,(z \sqeq y \wedge U(f(x)) \wedge \psi(x, y)).\]
By elementary equivalence, $\mc{M}'$ is also a model of this sentence.  Pick $q \in \mathbb{Q} \cap (0, 1)$ such that $q > r$.  For each $\alpha \in \kappa^+$, find $x_q^\alpha \in \kappa^+$ such that
\[\mc{M}' \models \sup_{y \in L}\,(\alpha \sqeq y \wedge U(f(x_q^\alpha))\,\wedge\,\psi(x_q^\alpha, y)) \geq q.\]
This implies that $\mc{M}' \models U(f(x_q^\alpha))$, so by our choice of $f$ we have that $x_q^\alpha < \kappa$.  Since $\kappa^+$ is regular there exists $x_q$ such that for all sufficiently large $\alpha$, $x_q = x_q^\alpha$.  We thus have
\[\mc{M}' \models \forall z \in L\,\sup_{y \in L}\,(z \sqeq y \wedge U(f(x_q)) \wedge \psi(x_q, y)) \geq q.\]
By elementary equivalence,
\[\mc{M}_0' \models \sup_{x \in L}\,\forall {z \in L}\,\sup_{y \in L}\,(z \sqeq y \wedge U(f(x)) \wedge \psi(x, y)) \geq q.\]
Now pick $r' \in \mathbb{Q} \cap (0, 1)$ such that $r < r' < q$.  Then there exists $x_{r'}$ such that
\[\mc{M}_0' \models \forall z \in L \sup_{y \in L} (z \sqeq y \wedge U(f(x_{r'})) \wedge \psi(x_{r'}, y)) \geq r'.\]
This implies that $\mc{M}_0' \models U(f(x_{r'})) = 1$, so there is some $l$ such that $x_{r'} = d_l \sqeq c_0$.  Thus, using that the metric $d$ is discrete in $L_0$,
\[\mc{M}_0' \models \forall z \in L\,\sup_{y \in L}\,(z \sqeq y \wedge \sup_{x \in L}(\psi(x, y) \geq r' \wedge d(x, d_l)=0).\]
We therefore have that $\psi(x, d) \geq r' \wedge d(x, d_l) = 0$ is consistent with $T$.  Since $d(x, d_l) = 1$ appears in $\Sigma$, this shows that $\psi(x, d) \geq r' \not\models \Sigma(x)$, and hence $\psi(x, d)~\geq~r \not\models \Sigma(x)$.
 
\renewcommand{\qedsymbol}{$\dashv$ -- Claim~\ref{claimApplication}}
\end{proof}

By Claim~\ref{claimApplication} and the Omitting Types Theorem (Theorem~\ref{thmOmittingTypes}) there is $\mc{M}_1' \models T$ that omits $\Sigma$.  Repeating the above argument $\omega_1$ times we get an elementary chain $(\mc{M}_\alpha')_{\alpha < \omega_1}$.  For each $\alpha < \omega_1$ let $\mc{M}_\alpha$ denote the reduct of $\mc{M}_\alpha'$ to $S$.  Then $\mc{N} = \bigcup_{\alpha < \omega_1}\mc{M}_\alpha$ is the desired model.
\end{proof}

We note that instead of using a discrete predicate $U$, we could instead have used a two-sorted language, with only notational differences in the proof.  We will use this in our application to non-trivial separable quotients of Banach spaces.  The following result was proved by Ben Yaacov and Iovino \cite{benYaacov2009} in the case of finitary continuous logic.

\begin{cor}\label{corBanach}
Let $X$ and $Y$ be infinite-dimensional Banach spaces with $\op{density}(X) > \op{density}(Y)$.  Let $T : X \to Y$ be a surjective bounded linear operator.  Let $L$ be a countable continuous fragment of $\IL(S)$, where $S$ is a two-sorted signature, each sort of which is the signature of Banach spaces, together with a symbol to represent $T$.  Then there are Banach spaces $X', Y'$ with $Y'$ separable and $X'$ of density $\aleph_1$, and a surjective bounded linear operator $T' : X' \to Y'$, such that $(X, Y, T) \equiv_L (X', Y', T')$.
\end{cor}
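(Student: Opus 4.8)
The plan is to deduce Corollary~\ref{corBanach} from the two-cardinal theorem (Theorem~\ref{thmKeisler}), after suitably encoding the operator $T$ and the two Banach spaces into a single metric structure with a distinguished sort. First I would set up the structure. Since the metric structures of this paper have metrics bounded by $1$, I would not work with the full Banach spaces $X$ and $Y$ directly but instead with their closed unit balls, viewed as metric structures in the usual way for continuous logic (recording addition, scalar multiplication by rationals, and the norm as a predicate valued in $[0,1]$, with the metric induced by the norm). Following the remark immediately preceding the corollary, I would use a genuinely two-sorted signature rather than a discrete predicate $U$: one sort for (the unit ball of) $X$, one sort for (the unit ball of) $Y$, together with a function symbol interpreting the operator $T$. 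Rescaling $T$ by its norm, we may assume $T$ maps the unit ball of $X$ into the unit ball of $Y$, so $T$ is an honest function symbol of the signature. The surjectivity of $T\colon X\to Y$ translates into the statement that $T$ restricted to the unit ball of $X$ has dense image in some ball of $Y$; by the open mapping theorem $T$ is open, so this is an $\IL$-expressible approximate surjectivity condition.

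The central step is to match the hypotheses. The structure $\mc{M}=(X,Y,T)$ has the sort for $Y$ of density $\op{density}(Y)$ and the whole structure of density $\op{density}(X)$, and since $\op{density}(X)>\op{density}(Y)$ this is a structure of type $(\kappa,\lambda)$ with $\kappa>\lambda\geq\aleph_0$ in the two-sorted analogue of the type-$(\kappa,\lambda)$ notion from the paper. I would then apply the two-sorted version of Theorem~\ref{thmKeisler} (whose proof the author states differs only notationally) with $U$ replaced by the $Y$-sort, obtaining $\mc{N}=(X',Y',T')\equiv_L\mc{M}$ of type $(\aleph_1,\aleph_0)$: that is, $X'$ has density $\aleph_1$ and $Y'$ has density $\aleph_0$, i.e. $Y'$ is separable. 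Because $L$ is a countable \emph{continuous} fragment, I would pass to metric completions so that $X'$ and $Y'$ are genuine Banach spaces; continuity of $L$ guarantees $\mc{N}\preceq_L\overline{\mc{N}}$, so elementary equivalence in $L$ is preserved and the completed spaces still satisfy the Banach-space axioms and $(X,Y,T)\equiv_L(X',Y',T')$.

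Finally I would verify that the resulting $T'\colon X'\to Y'$ is a surjective bounded linear operator. Boundedness and linearity are axiomatized by finitary continuous sentences and so transfer by elementary equivalence; boundedness follows from the fixed norm bound built into the function symbol. For surjectivity the key is the approximate-surjectivity sentence: because this is expressible in $\IL$ and holds in $\mc{M}$, it holds in $\mc{N}$ and hence in the completion $\overline{\mc{N}}$, giving that $T'$ has dense image; since $X'$ is a Banach space and $T'$ is bounded, the open mapping theorem (or a standard successive-approximation argument using the quantitative openness built into the sentence) upgrades dense image to genuine surjectivity onto $Y'$.

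The main obstacle I anticipate is not the model-theoretic transfer but the \emph{encoding}: one must choose the signature and axioms so that (i) everything fits inside unit balls with a $1$-bounded metric, (ii) surjectivity of $T$ is captured by a single $\IL$-sentence that is robust under elementary equivalence and under completion, and (iii) the density of the $Y$-sort really equals $\op{density}(Y)$ so that the type $(\kappa,\lambda)$ is as required. Getting the quantitative form of surjectivity right—so that the omitted-type/two-cardinal construction yields a $T'$ that is onto rather than merely dense-range—is the delicate point, and it is exactly where the open mapping theorem and the boundedness normalization are needed.
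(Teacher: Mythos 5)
Your proposal is correct and follows essentially the same route as the paper: apply the two-sorted form of Theorem~\ref{thmKeisler} to get $(\widetilde{X},\widetilde{Y},\widetilde{T})$ with the right densities, use continuity of the fragment to pass to completions, and recover surjectivity of $T'$ from elementary equivalence plus a quantitative open-mapping/successive-approximation argument. The only difference is that the paper outsources the last step to Ben Yaacov--Iovino (their Proposition 5.1, which is exactly the dense-range-to-surjectivity upgrade you describe), whereas you spell it out; note also that the approximate surjectivity statement you use is in fact expressible in the finitary part of the fragment, so full infinitary expressibility is not needed there.
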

\begin{proof}
By Theorem~\ref{thmKeisler} we get normed linear spaces $\widetilde{X}, \widetilde{Y}$ and a bounded linear map $\widetilde{T} : \widetilde{X} \to \widetilde{Y}$ with the desired properties.  Since $L$ is a continuous fragment we may take completions to get the desired spaces $X', Y'$ and operator $T'$.  It remains only to be seen that $T'$ is surjective, but this follows from elementary equivalence in the finitary part of $L$ and the linearity of $T$ (see \cite[Proposition 5.1]{benYaacov2009}).
\end{proof}

We note that if the space $Y$ in the statement of Corollary \ref{corBanach} is already separable then the Downward L\"owenheim-Skolem Theorem suffices to obtain a stronger result:

\begin{cor}
Fix a continuous countable fragment $L$ of $\IL$.  Then every infinite-dimensional separable quotient of a non-separable Banach space $X$ is also a quotient of a Banach space $X'$, where $X'$ has density $\aleph_1$, and $X' \preceq_L X$.
\end{cor}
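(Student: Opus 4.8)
The plan is to exploit the hypothesis that $Y$ is already separable, which lets us bypass the two-cardinal theorem (and hence Omitting Types) used in Corollary~\ref{corBanach} and instead locate $X'$ as an \emph{elementary substructure} of $X$ itself, yielding the stronger conclusion $X' \preceq_L X$. I would work in the two-sorted signature $S$ of Corollary~\ref{corBanach}, regarding $\mc{M} = (X, Y, T)$ as an $S$-structure with the symbol for $T$ interpreted by the given surjection. Since $L$ is a continuous fragment we have $\mc{M} \preceq_L \overline{\mc{M}}$, and the same continuity argument shows that $\preceq_L$ is preserved under passing to metric completions, so it will be harmless to complete the structure we build at the very end.

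First I would fix the data that must be captured inside the substructure. Choose a countable dense set $D_Y = \set{y_i : i < \omega} \subseteq Y$, and, using surjectivity of $T$, choose $x_i \in X$ with $Tx_i = y_i$; put $A_0 = \set{x_i : i < \omega}$. To force the density of the first sort up to $\aleph_1$, note that since $X$ is non-separable there is a single $\epsilon = 1/m > 0$ admitting an uncountable $\epsilon$-separated subset of $X$ (otherwise a countable $1/m$-net for each $m$ would union to a countable dense set); from such a set extract $B \subseteq X$ with $\abs{B} = \aleph_1$.

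Next I would apply Downward L\"owenheim--Skolem. Although Proposition~\ref{propDownLS} is stated for countable substructures, its proof adapts verbatim to give, for any infinite cardinal $\kappa$ and any $A \subseteq \mc{M}$ with $\abs{A} \leq \kappa$, an elementary substructure $\mc{N} \preceq_L \mc{M}$ with $A \subseteq \mc{N}$ and $\op{density}(\mc{N}) \leq \kappa$. Applying this with $\kappa = \aleph_1$ and $A = A_0 \cup B$ produces $\mc{M}' = (X', Y', T') \preceq_L \mc{M}$ containing $A_0 \cup B$ and of density at most $\aleph_1$. The separated set $B \subseteq X'$ forces every dense subset of the first sort to have cardinality at least $\aleph_1$, so in fact $\op{density}(X') = \aleph_1$. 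Passing to completions, $X' \preceq_L X$ survives by continuity of $L$, and the second sort becomes $\overline{Y'}$; since $D_Y \subseteq Y' \subseteq Y$ with $D_Y$ dense, $\overline{Y'} = Y$, so the quotient target is exactly the original $Y$.

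The main point to check is that $T'$ is onto $Y$. Each $y_i \in D_Y$ already has the exact preimage $x_i \in X'$, so $D_Y \subseteq T'(X')$; the remaining surjectivity follows exactly as in Corollary~\ref{corBanach}, from elementary equivalence in the finitary part of $L$ together with linearity of $T$. Concretely, the open mapping theorem provides a constant $C$ such that every $y \in Y$ has approximate preimages in $X$ of norm at most $C\norm{y}$, which is a finitary continuous statement and hence transfers to $\mc{M}'$; completeness of the completed first sort $X'$ then upgrades approximate preimages to exact ones, giving genuine surjectivity onto $\overline{Y'} = Y$ (see \cite[Proposition 5.1]{benYaacov2009}). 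I expect this surjectivity step to be the only real obstacle, since everything else is bookkeeping: the density of $X'$ is pinned between $\aleph_1$ (from $B$) and $\aleph_1$ (from L\"owenheim--Skolem), and $X' \preceq_L X$ is immediate from the construction.
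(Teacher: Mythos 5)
Your proposal is correct and takes essentially the same route as the paper: apply Downward L\"owenheim--Skolem to an elementary substructure $(X',Y',T') \preceq_L (X,Y,T)$ of density $\aleph_1$ containing a countable dense subset of $Y$, pass to completions using continuity of the fragment, and conclude $Y' = Y$ because $Y'$ is dense and closed in $Y$, with surjectivity of $T'$ handled as in the preceding corollary. Your additions (the $\epsilon$-separated set $B$ pinning the density at exactly $\aleph_1$, the exact preimages $A_0$, and the spelled-out open-mapping argument) only make explicit steps the paper leaves implicit.
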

\begin{proof}
Let $D \subseteq Y$ be countable and dense, and use Downward L\"owenheim-Skolem to find $(X', Y', T') \preceq_L (X, Y, T)$ of density $\aleph_1$ with $D \subseteq Y'$.  By the continuity of the fragment $L$, we may assume that $X'$ and $Y'$ are complete.  It therefore suffices to observe $Y' = Y$.  Indeed, we have $D \subseteq Y' \subseteq Y$, with $D$ dense in $Y$, so $Y'$ is also dense in $Y$.  Since $Y'$ is complete it is closed in $Y$, and hence $Y' = Y$.
\end{proof}
% ----------------------------------------------------------------
\bibliographystyle{amsalpha}
\bibliography{OmittingTypes}

\newcommand{\etalchar}[1]{$^{#1}$}
\providecommand{\bysame}{\leavevmode\hbox to3em{\hrulefill}\thinspace}
\providecommand{\MR}{\relax\ifhmode\unskip\space\fi MR }
% \MRhref is called by the amsart/book/proc definition of \MR.
\providecommand{\MRhref}[2]{%
  \href{http://www.ams.org/mathscinet-getitem?mr=#1}{#2}
}
\providecommand{\href}[2]{#2}
\begin{thebibliography}{BYBHU08}

\bibitem[AT04]{Argyros2004}
S.~Argyros and A.~Tolias, \emph{Methods in the theory of hereditarily
  indecomposable {B}anach spaces}, vol. 170, Memoirs of the American
  Mathematical Society, no. 806, American Mathematical Society, Providence,
  2004.

\bibitem[BDCK66]{Bretagnolle1965/1966}
J.~Bretagnolle, D.~Dacunha-Castelle, and J.~L. Krivine, \emph{Lois stables et
  espaces {$L^p$}}, Ann. Inst. H. Poincar\'e Sect. B (N. S.) \textbf{2}
  (1965/1966), 231--259.

\bibitem[BYBHU08]{BenYaacov2008a}
I.~Ben~Yaacov, A.~Berenstein, C.~W. Henson, and A.~Usvyatsov, \emph{Model
  theory for metric structures}, Model Theory with Applications to Algebra and
  Analysis, Vol. {II} (Z.~Chatzidakis, D.~Macpherson, A.~Pillay, and A.~Wilkie,
  eds.), Lecture Notes series of the London Mathematical Society, no. 350,
  Cambridge University Press, 2008, pp.~315--427.

\bibitem[BYI09]{benYaacov2009}
I.~Ben~Yaacov and J.~Iovino, \emph{Model theoretic forcing in analysis}, Annals
  of Pure and Applied Logic \textbf{158} (2009), no.~3, 163--174.

\bibitem[BYU07]{BenYaacov2007}
I.~Ben~Yaacov and A.~Usvyatsov, \emph{On {$d$}-finiteness in continuous
  structures}, Fund. Math. \textbf{194} (2007), 67--88.

\bibitem[BYU10]{BenYaacov2010}
\bysame, \emph{Continuous first order logic and local stability}, Transations
  of the American Mathematical Society \textbf{362} (2010), 5213--5259.

\bibitem[CCF{\etalchar{+}}12]{Carlson}
K.~Carlson, E.~Cheung, I.~Farah, A.~Gerhardt-Bourke, B.~Hart, L.~Mezuman,
  N.~Sequeira, and A.~Sherman, \emph{Omitting types and {AF} algebras},
  preprint.

\bibitem[CI12]{Caicedo2012}
X.~Caicedo and J.~Iovino, \emph{Omitting uncountable types, and the strength of
  $[0,1]$-valued logics}, preprint, 2012.

\bibitem[CK66]{Chang1966}
C.C. Chang and H.J. Keisler, \emph{Continuous model theory}, Princeton
  University Press, 1966.

\bibitem[CK90]{Chang1990}
\bysame, \emph{Model theory}, 3 ed., North Holland, 1990.

\bibitem[DCK70]{Dacunha-Castelle1970}
D.~Dacunha-Castelle and J.-L. Krivine, \emph{Ultraproduits d'espaces d'{O}rlicz
  et applications g\'eom\'etriques.}, C. R. Acad. Sci. Paris S\'er. A-B
  \textbf{271} (1970), A987--A989.

\bibitem[DCK72]{Dacunha-Castelle1972}
D.~Dacunha-Castelle and J.~L. Krivine, \emph{Applications des ultraproduits \'a
  l'\'etude des espaces et des alg\'ebras de {B}anach}, Studia. Math.
  \textbf{41} (1972), 315--334.

\bibitem[Eng89]{Engelking1989}
R.~Engelking, \emph{General topology}, Heldermann, 1989.

\bibitem[Fre84]{Fremlin1984}
D.~H. Fremlin, \emph{Consequences of {M}artin's {A}xiom}, Cambridge University
  Press, 1984.

\bibitem[GJ76]{Gillman1976}
L.~Gillman and M.~Jerison, \emph{Rings of continuous functions},
  Springer-Verlag, 1976.

\bibitem[H\'98]{Hajek1998}
P.~H\'ajek, \emph{Metamathematics of fuzzy logic}, Trends in Logic--Studia
  Logica Library, vol.~4, Kluwer Academic Publishers, Dordrecht, 1998.

\bibitem[Hei81]{Heinrich1981}
S.~Heinrich, \emph{Ultraproducts of {$L_1$}-predual spaces}, Fund. Math.
  \textbf{113} (1981), 221--234.

\bibitem[Hen76]{Henson1976}
C.~W. Henson, \emph{Nonstandard hulls of {B}anach spaces}, Israel J. Math.
  \textbf{25} (1976), 108--144.

\bibitem[HI03]{Henson2003}
C.~W. Henson and J.~Iovino, \emph{Ultraproducts in analysis}, Analysis and
  Logic, London Mathematical Society Lecture Note Series, no. 262, Camb, 2003.

\bibitem[HPS00]{Hajek2000}
P.~H\'ajek, J.~Paris, and J.~Shepherdson, \emph{Rational pavelka predicate
  logic is a conservative extension of {\l}ukasiewicz predicate logic}, J.
  Symb. Logic \textbf{65} (2000), 669--682.

\bibitem[Iov]{IovinoPrivate}
J.~Iovino, \emph{Private communication}.

\bibitem[Jec78]{Jech1978}
T.~Jech, \emph{Set theory}, Academic Press, 1978.

\bibitem[Kei71]{Keisler1971}
H.J. Keisler, \emph{Model theory for infinitary logic: Logic with countable
  conjuctions and finite quantifiers}, North-Holland, 1971.

\bibitem[KM81]{Krivine1981}
J.-L. Krivine and B.~Maurey, \emph{Espaces de {B}anach stables}, Israel J.
  Math. \textbf{39} (1981), 273--295.

\bibitem[KM01]{Keisler2001}
H.~J. Keisler and A.~W. Miller, \emph{Categoricity without equality}, Fund.
  Math. \textbf{170} (2001), 87--106.

\bibitem[Kri67]{Krivine1967}
J.~L. Krivine, \emph{Sous-espaces et {C}ones {C}onvexes dans les {E}spaces
  {$L^p$}}, Ph.D. thesis, Universit\'e de Paris, 1967.

\bibitem[Kri72]{Krivine1972}
J.-L. Krivine, \emph{Th\'eorie des mod\`eles et espaces {$L^{p}$}}, C. R. Acad.
  Sci. Paris S\'er. A-B \textbf{275} (1972), A1207--A1210. \MR{0308765 (46
  \#7879)}

\bibitem[Kri74]{Krivine1974}
\bysame, \emph{Langages \`a valeurs r\'eelles et applications}, Fund. Math.
  \textbf{81} (1974), 213--253, Collection of articles dedicated to Andrzej
  Mostowski on the occasion of his sixtieth birthday, III.

\bibitem[Kri76]{Krivine1976}
J.~L. Krivine, \emph{Sous-espaces de dimension finie des espaces de {B}anach
  r\'eticul\'es}, Ann. of Math. (2) \textbf{104} (1976), 1--29.

\bibitem[Kri84]{Krivine1984}
J.-L. Krivine, \emph{M\'ethodes de th\'eorie des mod\`eles en g\'eom\'etrie des
  espaces de {B}anach}, General logic seminar, 1982--83 ({P}aris, 1982--83),
  Publ. Math. Univ. Paris VII, Univ. Paris VII, Paris, 1984, pp.~179--186.

\bibitem[Lux69]{Luxemburg1969}
W.~A.~J. Luxemburg, \emph{A general theory of monads}, Applications of Model
  Theory to Algebra, Analysis, and Probability (Internat. Sympos., Pasadena,
  Calif., 1967), Holt, Rinehart and Winston, New York, 1969.

\bibitem[Mor74]{Morley1974}
M.~Morley, \emph{Applications of topology to ${L}_{\omega_1, \omega}$},
  Proceedings, vol.~25, Amer Mathematical Society, 1974, p.~233.

\bibitem[Pav79a]{Pavelka1979}
J.~Pavelka, \emph{On fuzzy logic. {I}.}, Z. Math. Logik Grundlag. Math.
  \textbf{25} (1979), no.~1, 45--52.

\bibitem[Pav79b]{Pavelka1979a}
\bysame, \emph{On fuzzy logic. {II}. enriched residuated lattices and semantics
  of propositional calculi}, Z. Math. Logik Grundlag. Math. \textbf{25} (1979),
  no.~2, 119--134.

\bibitem[Pav79c]{Pavelka1979b}
\bysame, \emph{On fuzzy logic. {III}. semantical completeness of some
  many-valued propositional calculi}, Z. Math. Logik Grundlag. Math.
  \textbf{25} (1979), no.~5, 447--464.

\bibitem[Pis]{Pisier}
G.~Pisier, \emph{Martingales in {B}anach spaces (in connection with {T}ype and
  {C}otype)}, Course {IHP}, {F}eb. 2--8, 2011.

\bibitem[PR08]{Poitevin2008}
L.~P. Poitevin and Y.~Raynaud, \emph{Ranges of positive contractive projections
  in {N}akano spaces, preprint}, Koninklijke Nederlandse Akademie van
  Wetenschappen. Indagationes Mathematicae. New Series \textbf{19} (2008),
  441--464.

\bibitem[Ste78]{Stern1978}
J.~Stern, \emph{Ultrapowers and local properties of {B}anach spaces}, Trans.
  Amer. Math. Soc. \textbf{240} (1978), 231--252.

\bibitem[Tal74]{Tall1974}
F.~D. Tall, \emph{The countable chain condition versus separability --
  applications of {M}artin's {A}xiom}, Topology Appl. \textbf{4} (1974),
  315--339.

\end{thebibliography}
\end{document}